\numberwithin{equation}{section}
\pgfplotsset{compat=newest}
\titleformat{\section}[block]{\large\sc\filcenter}{\thesection.}{0.5ex}{}[]
\titleformat{\subsection}[runin]{\bf}{\thesubsection.}{0.5ex}{}[.]
\newtheorem{lemma}{Lemma}[section]
\newaliascnt{proposition}{lemma}
\newaliascnt{corollary}{lemma}
\newaliascnt{theorem}{lemma}
\newtheorem{theorem}[theorem]{Theorem}
\newaliascnt{definition}{lemma}
\newtheorem{definition}[definition]{Definition}
\newaliascnt{assumption}{lemma}
\newaliascnt{notation}{lemma}
\newtheorem{notation}[notation]{Notation}
\newaliascnt{example}{lemma}
\newtheorem{example}[example]{Example}
\newaliascnt{conjecture}{lemma}
\newtheorem{conjecture}[conjecture]{Conjecture}
\newaliascnt{remark}{lemma}
\newtheorem{remark}[remark]{Remark}
\theoremstyle{nonumberplain}
\newtheorem{proof}{Proof}
\newcommand{\N}{\mathds{N}}
\newcommand{\R}{\mathds{R}}
\let\RE\Re
\let\Re=\undefined
\DeclareMathOperator{\Re}{\RE e}
\let\IM\Im
\let\Im=\undefined
\DeclareMathOperator{\Im}{\IM m}
\newcommand{\abs}[1]{\left|#1\right|}
\newcommand{\norm}[1]{\left\|#1\right\|}
\newcommand{\set}[1]{\left\{#1\right\}}
\newcommand{\inner}[2]{\left<#1,#2\right>}
\newcommand{\e}{\mathrm e}
\let\ii\i
\renewcommand{\i}{\mathrm i}
\newcommand{\domain}[1]{\mathcal{D}{(#1)}}
\newcommand{\range}[1]{\mathcal{R}{(#1)}}
\newcommand{\bx}{{\bf{x}}}
\newcommand{\by}{{\bf{y}}}
\newcommand{\bw}{{\bf{w}}}
\newcommand{\bz}{{\bf{z}}}
\newcommand{\bh}{{\bf{h}}}
\newcommand{\vx}{{\vec{x}}}
\newcommand{\vp}{{\vec{p}}\,{}}
\newcommand{\vq}{{\vec{q}}\,{}}
\newcommand{\vh}{{\vec{h}}}
\newcommand{\val}{{\vec{\alpha}}}
\newcommand{\vth}{{\vec{\theta}}}
\newcommand{\be}{{\mathbf{e}}}
\newcommand{\bY}{{\mathbf{Y}}}
\newcommand{\bX}{{\mathbf{X}}}
\newcommand{\bP}{{\vec{P}}}
\newcommand{\bE}{{\mathbf{E}}}
\newcommand{\ve}{\varepsilon}
\title{Newton's methods for solving linear inverse problems with neural network coders%
\footnotetext{\textup{2000} \textit{Mathematics Subject Classification}: 65J22, 47J07}
\footnotetext{\textit{Keywords}: Gauss-Newton's method, inverse problems, neural networks}
}
\author{Otmar Scherzer$^{1,2,3}$\\{\footnotesize\href{mailto:otmar.scherzer@univie.ac.at}{otmar.scherzer@univie.ac.at}}
	\and Bernd Hofmann$^4$\\{\footnotesize\href{mailto:hofmannb@mathematik.tu-chemnitz.de}{hofmannb@mathematik.tu-chemnitz.de}}
	\and Zuhair Nashed$^5$\\{\footnotesize\href{mailto:zuhair.nashed@ucf.edu}{zuhair.nashed@ucf.edu}}
}
\date{\today}
\begin{document}

\maketitle
\thispagestyle{empty}
\begin{center}
\parbox[t]{10em}{\footnotesize
\hspace*{-1ex}$^1$Faculty of Mathematics\\
University of Vienna\\
Oskar-Morgenstern-Platz 1\\
A-1090 Vienna, Austria}
\hfil
\parbox[t]{16em}{\footnotesize
\hspace*{-1ex}$^2$Johann Radon Institute for Computational\\
\hspace*{1em} and Applied Mathematics (RICAM)\\
Altenbergerstraße 69\\
A-4040 Linz, Austria}
\hfil
\parbox[t]{16em}{\footnotesize
	\hspace*{-1ex}$^3$Christian Doppler Laboratory for\\
	\hspace*{1em}Mathematical Modeling and Simulation\\
	\hspace*{1em}of Next Generations of Ultrasound\\
	\hspace*{1em} Devices (MaMSi)\\
	Oskar-Morgenstern-Platz 1\\
	A-1090 Vienna, Austria}
\end{center}
\begin{center}
\parbox[t]{17em}{\footnotesize
\hspace*{-1ex}$^4$Faculty of Mathematics\\
Chemnitz University of Technology\\
Reichenhainer Str. 39/41\\
D-09107 Chemnitz, Germany}
\hfil
	\parbox[t]{15em}{\footnotesize
		\hspace*{-1ex}$^5$College of Sciences\\
		\hspace*{0em}University of Central Florida\\
		\hspace*{0em}4393 Andromeda Loop N\\
		Orlando, FL 32816, USA}
\end{center}

\begin{abstract}
	Neural networks functions are supposed to be able to encode the desired solution of an inverse problem 
	very efficiently.  
	In this paper, we consider the problem of solving linear inverse problems with neural network coders. 
	First we establish some correspondences of this formulation with existing concepts in regularization theory, 
	in particular with state space regularization, operator decomposition and 
	iterative regularization methods. A Gauss-Newton's method is suitable for solving encoded linear inverse problems, 
	which is supported by a local convergence result. 
	The convergence studies, however, are not complete, and are based on a conjecture on linear independence of 
	activation functions and its derivatives.
	\bigskip\par\noindent
	This paper is dedicated to Heinz Engl's 70th birthday.
\end{abstract}

\section{Introduction}
We start the discussion by considering a \emph{general nonlinear operator equation} 
\begin{equation}\label{eq:ip_general}
	N(\vp) = \by,
\end{equation}
where $N:\bP \to \bY$ is a nonlinear operator between Hilbert spaces $\bP$ and $\bY$. 
Particular emphasis is placed on the case when the numerical solution of \autoref{eq:ip_general} is ill-posed or ill-conditioned. 
In this situation regularization methods need to be implemented (see \cite{EngHanNeu96}) 
to guarantee stable solvability. Essentially two classes of methods exist as the basis for solving inverse problems numerically, 
which are \emph{variational methods} 
(see for instance \cite{EngHanNeu96,SchGraGroHalLen09,Hof86,SchuKalHofKaz12,BakGon94,Mor84}) and \emph{iterative methods} 
(see for instance \cite{KalNeuSch08,BakGon89}). 

Modern \emph{coding theory} (see for instance \cite{BorJalPriDim17,DufCamEhr21_report,BraRajRumWir22_report,RifMesVinMulBen11,BorJalPriDim17})
assumes that \emph{natural images} can be represented efficiently by a combination of \emph{neural network} functions. 
Therefore the \emph{set of natural images} is given by the range of a nonlinear mapping $\Psi$,
which maps neural network coefficients to images. In coding theory very often the 
considered operator equation is assumed to be \emph{linear} and \emph{ill-posed}. Let us consider 
therefore the equation
\begin{equation}\label{eq:ip}
	F \bx = \by,
\end{equation}
where $F:\bX \to \bY$ is a bounded linear operator with non-closed range mapping between the Hilbert spaces $\bX$ and $\bY$ such that
$\bx$ and $\by$ are elements of $\bX$ and $\bY$, respectively. 
Combining this with the assumption that the solution of \autoref{eq:ip} is a natural image, or in other words that it can be represented as a 
combination of neural network functions, we get the operator equation 
\begin{equation}\label{eq:ip_2}
	N(\vp)= F \Psi(\vp) = \by,
\end{equation}
where $\Psi:\bP \to \bX$ is the before mentioned nonlinear operator that maps neural network parameters to image functions. We call
\begin{itemize}
	\item $\bX$ the \emph{image space} and 
	\item $\bY$ the \emph{data space}, in accordance with a terminology, which we generated in \cite{AspKorSch20,AspFriKorSch21}.
	\item $\bP$ is called the \emph{parameter space}. We make a different notation for $\bP$, because it represents parametrizations, and is often considered a space of vectors below.
\end{itemize}
The advantage of this ansatz is that the solution of \autoref{eq:ip} is sparsely coded. 
However, the price to pay is that the reformulated \autoref{eq:ip_2} is nonlinear. 
Operator equations of the form of \autoref{eq:ip_2} are not new: 
They have been studied in abstract settings for instance in the context of 
\begin{itemize}
	\item \emph{state space regularization} \cite{ChaKun93} and 
	\item in the context of the \emph{degree of ill--posedness} \cite{GorHof94,Hof99,Hof94,HofSch94,HofTau97} as well as of the \emph{degree of nonlinearity} \cite{HofSch94} of nonlinear ill--posed operator equations.
	\item Another related approach is \emph{finite dimensional approximation} of regularization in Banach spaces (see for instance \cite{PoeResSch10}). 
	Particularly, finite dimensional approximations of regularization methods with neural network functions (in the context of frames and 
	\emph{deep synthesis regularization}) have been studied in \cite{ObmSchwHal21}.
\end{itemize}
In this paper we aim to link the general regularization of the degree of ill--posedness and nonlinearity with coding theory. 
We investigate generalized Gauss-Newton's methods for solving \autoref{eq:ip_2}; Such methods replace the inverse of standard Newton's method by approximations of outer inverses (see \cite{NasChe93}).

The outline of the paper is as follows: In \autoref{sec:dec} we review two decomposition cases as stated in \cite{Hof94} first, 
one of them is \autoref{eq:ip_2}. The study of decomposition cases follows the work on classifying inverse problems and regularization (see \cite{Nas87b}). For operators associated to \autoref{eq:ip_2}, Newton's methods seem to be better suited than gradient descent methods, which we support by a convergence analysis (see \autoref{sec:NN}). Section~\ref{sec:cond} is devoted to solving \autoref{eq:ip_2}, where $\Psi$ is a shallow neural network 
synthesis operator.

\section{Decomposition cases} \label{sec:dec}

We start with a definition for nonlinear operator equations possessing forward operators that are compositions of a linear and a nonlinear operator. Precisely, we distinguish between a first decomposition case (i), where the linear operator is the inner operator and the nonlinear is the outer one,
and a second decomposition case (ii), where the nonlinear operator is the inner operator and the linear is the outer operator.

\begin{definition}[Decomposition cases]
	Let $\bP,\bX, \bY$ be Hilbert-spaces. 
	\begin{enumerate}
		\item 
		An operator $N$ is said to satisfy the \emph{first decomposition case} in an open, non-empty neighborhood 
		$\mathcal{B}(\vp^\dagger;\rho) \subseteq \bP$ of some point $\vp^\dagger$ if there exists a linear operator $F:\bP \to \bX$ and a nonlinear operator $\Psi:\bX \to \bY$ such that 
		\begin{equation*}
			N(\vp) = \Psi(F\vp) \text{ for } \vp \in \mathcal{B}(\vp^\dagger;\rho).
		\end{equation*}
		\item 
		$N$ is said to satisfy the \emph{second decomposition case} in a neighborhood 
		$\mathcal{B}(\vp^\dagger;\rho) \subseteq \bP$ of some point $\vp^\dagger$ if there exists a linear operator $F:\bX \to \bY$
		and a nonlinear operator $\Psi:\bP \to \bX$ such that 
		\begin{equation} \label{eq:decomposition}
			N(\vp) = F \Psi(\vp) \text{ for } \vp \in \mathcal{B}(\vp^\dagger;\rho).
		\end{equation}
		Typically it is assumed that the nonlinear operator $\Psi$ is well--posed.
	\end{enumerate}
\end{definition}
\begin{remark}[First decomposition case] In \cite{Hof94}, this decomposition case has been studied under structural conditions, relating the second derivative of $N$ with the first derivative. Under such assumptions convergence rates conditions (see \cite[Lemma 4.1]{Hof94}) could be proven. The first decomposition case
also arises in inverse option pricing problems in math finance (see \cite{HeiHof03} and \cite[Sect.4]{HofKalPoeSch07}), where the ill-posed compact linear integration operator occurs
as inner operator and a well-posed Nemytskii operator as outer operator.
\end{remark}

\begin{remark}[Second decomposition case]
	Regularization methods for solving operator equations with operators satisfying the second order decomposition case, see \autoref{eq:decomposition}, were probably first analyzed in \cite{ChaKun93} under the name of \emph{state space regularization}. They considered for instance Tikhonov-type regularization methods, consisting in minimization of 
	\begin{equation}\label{eq:CKCS}
		J_\lambda(\vp) = \norm{F\Psi(\vp) -\by}_\bY^2 +\lambda \norm{\Psi(\vp)-\tilde{\bx}}_\bX^2, 
	\end{equation}
	where $\tilde{\bx}$ is a prior and $\lambda > 0$ is a regularization parameter. In \cite{ChaKun93} they derived estimates for the second derivative of $J_\lambda(\vp\!_\lambda)(\bh,\bh)$, where $\bh \in \bP$, that is for the curvature of $J_\lambda$. If the curvature can be bounded from below by some terms $\norm{\bh}_\vp\!\!\!\!{}^2$ then, for instance, a locally unique minimizer of $J_\lambda$ can be guaranteed and also domains can be specified where the functional is convex. Conditions, which guarantee convexity are called 
	\emph{curvature to size conditions}.
	Subsequently, these decomposition cases have been studied exemplarily in \cite{Hof94}. The theory developed there directly applies to \autoref{eq:ip_2}. 
	
	Instead of $J_\lambda$ researchers often study direct regularization with respect to $\vp$. For instance in \cite{DufCamEhr21_report} 
	functionals of the form 
	\begin{equation}\label{eq:CKCSII}
		J_\lambda(\vp) = \norm{F\Psi(\vp) -\by}_\bY^2 +\lambda \mathcal{L}(\vp), 
	\end{equation}
	where $\mathcal{L}$ is some functional directly regularizing the parameter space. Typically $\mathcal{L}$ is chosen to penalize for sparsity of parameters. 
	The main difference between \autoref{eq:CKCS} and \autoref{eq:CKCSII} is that in the prior regularization is performed with respect to the image space $\bX$ and in the later with respect to the parameter space $\bP$. Well-posedness of the functional $J_\lambda$ in \autoref{eq:CKCS} follows if $F \circ \Psi$ is lower-semicontinuous, which in turn follows if $\Psi$ is invertible. 
\end{remark}
In the following we study the solution of decomposable operator equations, such as \autoref{eq:ip_2}, with Gauss-Newton's methods. 
Decomposition cases have been used in the analysis of iterative regularization methods as well (see \cite{KalNeuSch08}):
\begin{definition}[Strong tangential cone condition]
	Let $N: \domain{N} \subset \bP \to \bY$ with $\domain{N}$ its domain be a nonlinear operator. 
	\begin{enumerate}
		\item Then $N$ is said to satisfy the strong tangential cone condition, originally introduced in \cite{HanNeuSch95}, if 
		\begin{equation} \label{eq:stcc}
			N'(\vp\!_2) = R_{\vp\!_2,\vp\!_1} N'(\vp\!_1) \text{ for all } \vp\!_1,\vp\!_2 \in \domain{N}.
		\end{equation}
		where 
		\begin{equation} \label{eq:stcc_norm}
			\norm{R_{\vp\!_2,\vp\!_1}-I} \leq C_T \norm{ \vp\!_2 - \vp\!_1}_\bP.
		\end{equation}
	\item In \cite{Bla96} the \emph{order reversed} tangential cone condition, 
	\begin{equation} \label{eq:stcc_bb}
		N'(\vp\!_2) = N'(\vp\!_1) R_{\vp\!_2,\vp\!_1}  \text{ for all } \vp\!_1,\vp\!_2 \in \domain{N},
	\end{equation}	
	together with \autoref{eq:stcc_norm}, has been introduced.
	\end{enumerate}
\end{definition}

\begin{remark}
	\autoref{eq:stcc} has been used for analyzing \emph{gradient descent methods} (see for instance \cite{HanNeuSch95,KalNeuSch08}). For the analysis of Newton's methods \autoref{eq:stcc_bb} has been used (see \cite{Bla96,KalNeuSch08}).  
\end{remark}
The relation to the decomposition cases is as follows:
\begin{lemma} \label{le:rtcc}
	Let $N: \domain{N} \subseteq \bP \to \bY$ with $\domain{N} = \mathcal{B}(\vp^\dagger;\rho)$
	satisfy the second decomposition case and assume that $\Psi'(\vp)$ is invertible for $\vp \in \domain{N}$. Then $N$ satisfies \autoref{eq:stcc_bb}.
\end{lemma}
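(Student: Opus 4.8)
The plan is to exploit the composite structure $N = F \circ \Psi$ directly through the chain rule and then read off the multiplier $R_{\vp\!_2,\vp\!_1}$ from the invertibility of $\Psi'$. Since $F$ is linear and bounded, differentiating $N(\vp) = F\Psi(\vp)$ gives $N'(\vp) = F\,\Psi'(\vp)$ for every $\vp \in \domain{N}$. Consequently, for any $\vp\!_1,\vp\!_2 \in \domain{N}$ one has $N'(\vp\!_2) = F\,\Psi'(\vp\!_2)$ and $N'(\vp\!_1) = F\,\Psi'(\vp\!_1)$, so the task reduces to writing $\Psi'(\vp\!_2)$ as $\Psi'(\vp\!_1)$ composed with a bounded operator that is close to the identity in the required quantitative sense.

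First I would define, using the standing assumption that $\Psi'(\vp\!_1)$ is invertible,
\begin{equation*}
    R_{\vp\!_2,\vp\!_1} := \Psi'(\vp\!_1)^{-1}\Psi'(\vp\!_2).
\end{equation*}
Substituting this into $N'(\vp\!_1) R_{\vp\!_2,\vp\!_1}$ and cancelling $\Psi'(\vp\!_1)\Psi'(\vp\!_1)^{-1} = I$ immediately yields $N'(\vp\!_1) R_{\vp\!_2,\vp\!_1} = F\,\Psi'(\vp\!_2) = N'(\vp\!_2)$, which is exactly the order-reversed identity \autoref{eq:stcc_bb}. It then remains to verify the Lipschitz-type bound \autoref{eq:stcc_norm}. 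Writing
\begin{equation*}
    R_{\vp\!_2,\vp\!_1} - I = \Psi'(\vp\!_1)^{-1}\bigl(\Psi'(\vp\!_2) - \Psi'(\vp\!_1)\bigr),
\end{equation*}
submultiplicativity of the operator norm gives $\norm{R_{\vp\!_2,\vp\!_1} - I} \leq \norm{\Psi'(\vp\!_1)^{-1}}\,\norm{\Psi'(\vp\!_2) - \Psi'(\vp\!_1)}$. To turn this into $C_T\norm{\vp\!_2 - \vp\!_1}_\bP$ I would invoke two regularity properties that must accompany the bare invertibility hypothesis: a uniform bound $\norm{\Psi'(\vp)^{-1}} \leq M$ on $\domain{N}$, and Lipschitz continuity of the derivative $\norm{\Psi'(\vp\!_2) - \Psi'(\vp\!_1)} \leq L\norm{\vp\!_2 - \vp\!_1}_\bP$ (for instance $\Psi \in C^2$ with second derivative bounded on the ball). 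Combining these gives \autoref{eq:stcc_norm} with $C_T = ML$.

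The main obstacle, and indeed the only nontrivial point, is precisely the justification of these two uniform estimates. Invertibility of $\Psi'(\vp)$ at each point does not by itself yield a uniform bound on $\norm{\Psi'(\vp)^{-1}}$ over the open ball $\mathcal{B}(\vp^\dagger;\rho)$, nor does $C^1$-smoothness supply a global Lipschitz modulus for $\Psi'$; one genuinely needs either continuity of $\vp \mapsto \Psi'(\vp)^{-1}$ together with a passage to a closed subball, or a direct second-order smoothness assumption on $\Psi$. I would therefore make these standing hypotheses on $\Psi$ explicit and, if necessary, carry out the estimate on a closed ball $\overline{\mathcal{B}(\vp^\dagger;\rho')}$ with $\rho' < \rho$, on which both $\norm{\Psi'(\vp)^{-1}}$ and the Lipschitz constant of $\Psi'$ are controlled. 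Everything else in the argument is the elementary chain-rule factorization above.
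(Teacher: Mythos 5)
Your core argument---the chain rule $N'(\vp) = F\,\Psi'(\vp)$ followed by defining $R_{\vp\!_2,\vp\!_1} := \Psi'(\vp\!_1)^{-1}\Psi'(\vp\!_2)$ and cancelling---is exactly the paper's proof, and it is correct. Note that the lemma asserts only the factorization identity \autoref{eq:stcc_bb}, not the quantitative bound \autoref{eq:stcc_norm}, so your (accurate) observation that the bound would require uniform invertibility of $\Psi'(\vp)^{-1}$ and Lipschitz continuity of $\Psi'$ on $\domain{N}$ goes beyond what is claimed; the paper's own proof likewise stops at the factorization and never addresses \autoref{eq:stcc_norm}.
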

\begin{proof}
	If $N$ satisfies the second decomposition case, \autoref{eq:decomposition}, then $N'(\vp) = F \Psi'(\vp)$ for all $\vp \in \domain{N}$. Note, that because $F$ is defined on the whole space $\bX$, $\domain{N}=\domain{\Psi}$.
	By using the invertability assumption on $\Psi$ we get  
	\begin{equation*}
		\begin{aligned}
			N'(\vp\!_2) &= F \Psi'(\vp\!_2) = F \Psi'(\vp\!_1) \underbrace{\Psi'(\vp\!_1)^{-1} \Psi'(\vp\!_2)}_{=:R_{\vp\!_2,\vp\!_1}} = N'(\vp\!_1) R_{\vp\!_2,\vp\!_1},
		\end{aligned}
	\end{equation*}
	which gives the assertion.
\end{proof}
As we have shown, decomposition cases have been extensively studied in the regularization literature. One conclusion out of these studies is that the order reversed 
tangential cone condition \autoref{eq:stcc_bb} is suitable for analyzing Newton's methods \cite{Bla96,KalNeuSch08} and thus in turn for the coded linear operator \autoref{eq:ip_2} because of \autoref{le:rtcc}. The standard tool for analyzing Newton's methods is the Newton-Mysovskii condition as discussed below.

\section{The Newton-Mysovskii Conditions} \label{sec:NN}
In this section we put abstract convergence conditions for Newton type methods in context with decoding. 
We consider first Newton's methods for solving the \emph{general} operator \autoref{eq:ip_general}.
Decomposition cases of the operator $N$ will be considered afterwards.

\subsection{Newton's method with invertible linearizations}
For Newton's methods \emph{local convergence} is guaranteed under \emph{Newton-Mysovskii} conditions. For comparison reasons, we first recall a simple Newton's method analysis in finite dimensional spaces if the nonlinear operator has derivatives which are invertible. The proof of more general results, such as \autoref{th:deupot92dg} below, applies here as well, and thus here the proof is omitted.
Several variants of Newton-Mysovskii conditions have been proposed in the literature (see for instance \cite{DeuHei79,DeuPot92,NasChe93}). 
Analysis of Newton's method was an active research area in the last century, see for instance \cite{Ort68,Schw79}.

\begin{theorem}[Finite dimensional Newton's method] \label{th:deupot92} Let $N: \domain{N} \subseteq \R^n \to \R^n$ be continuously Fr\'echet-differentiable on a non-empty, open and convex set $\domain{N}$. Let $\vp^\dagger \in \domain{N}$ be a solution of \autoref{eq:ip_general}.
	Moreover, we assume that
	\begin{enumerate}
		\item $N'(\vp)$ is invertible for all $\vp \in \domain{N}$ and that 
		\item the \emph{Newton-Mysovskii condition} holds: That is, there exist some $C_N > 0$ such that
		\begin{equation} \label{eq:wrnmi}
			\begin{aligned}
				\norm{N'(\vq)^{-1}(N'(\vp+s(\vq-\vp))-N'(\vp))(\vq-\vp)}_{\bP} \leq s C_N \norm{\vp-\vq}_\bP^2 \\ 
				\text{ for all } \vp, \vq \in \domain{N}, s \in [0,1].
			\end{aligned}
		\end{equation}
	\end{enumerate}	
	Let $\vp^0 \in \domain{N}$ which satisfies
	\begin{equation}\label{eq:h} 
		\overline{\mathcal{B}(\vp^0;\rho)} \subseteq \domain{N} \text{ with }\rho := \norm{\vp^\dagger-\vp^0}_\bP
		\text{ and } h:= \frac{\rho C_I C_L}{2} <1. 
	\end{equation}
	Then the Newton's iteration with starting point $\vp^0$, 
	\begin{equation} \label{eq:newton_invert} \begin{aligned}
			\vp^{k+1} = \vp^k - N'(\vp^k)^{-1}(N(\vp^k)-\by) 
			\quad k \in \N_0,
		\end{aligned}
	\end{equation}
	satisfies that the iterates $\set{\vp^k: k=0,1,2,\ldots}$ belong to $\overline{\mathcal{B}(\vp^0;\rho)}$ and 
	converge quadratically to $\vp^\dagger \in  \overline{\mathcal{B}(\vp^0,\rho)}$.
\end{theorem}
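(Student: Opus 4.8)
The plan is to contract the error $\rho_k := \norm{\vp^k-\vp^\dagger}_\bP$ by a quadratic recursion $\rho_{k+1}\le\tfrac{C_N}{2}\rho_k^2$ and then to read off both the ball invariance and the quadratic rate from a one-line induction. Everything rests on $\by=N(\vp^\dagger)$, which lets me subtract the vanishing residual at the solution inside the Newton step. Rewriting the update \eqref{eq:newton_invert} as
\[
\vp^{k+1}-\vp^\dagger=N'(\vp^k)^{-1}\Bigl(N'(\vp^k)(\vp^k-\vp^\dagger)-\bigl(N(\vp^k)-N(\vp^\dagger)\bigr)\Bigr)
\]
and inserting $N(\vp^k)-N(\vp^\dagger)=\int_0^1 N'\bigl(\vp^\dagger+s(\vp^k-\vp^\dagger)\bigr)(\vp^k-\vp^\dagger)\d s$, which is legitimate because $\domain{N}$ is convex and hence contains the segment, gives the integral representation
\[
\vp^{k+1}-\vp^\dagger=N'(\vp^k)^{-1}\int_0^1\bigl(N'(\vp^k)-N'(\vp^\dagger+s(\vp^k-\vp^\dagger))\bigr)(\vp^k-\vp^\dagger)\d s .
\]

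The next step is to estimate this expression. Using $\norm{N'(\vp^k)^{-1}}\le C_I$ together with the Lipschitz bound $\norm{N'(\vp^k)-N'(\vp^\dagger+s(\vp^k-\vp^\dagger))}\le C_L(1-s)\rho_k$ — the two ingredients whose product $C_N=C_IC_L$ is exactly the constant appearing in $h$ in \eqref{eq:h} — and integrating the factor $(1-s)$ over $[0,1]$, I obtain
\[
\rho_{k+1}\le\frac{C_N}{2}\,\rho_k^2 .
\]
The Newton--Mysovskii inequality \eqref{eq:wrnmi} is the affine-covariant repackaging of these two bounds, and I will use whichever form keeps the inverse attached to $N'(\vp^k)$.

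The induction then closes quickly. Setting $h_k:=\tfrac{C_N}{2}\rho_k$ turns the recursion into $h_{k+1}\le h_k^2$, so $h_k\le h^{2^k}$ with $h=h_0=\tfrac{\rho C_N}{2}<1$ by \eqref{eq:h}; hence $\rho_k\le\tfrac{2}{C_N}h^{2^k}\to0$ at a quadratic rate. The monotonicity $\rho_{k+1}\le\rho_k$ shows that every iterate stays within distance $\rho$ of $\vp^\dagger$, i.e. in $\overline{\mathcal{B}(\vp^\dagger;\rho)}$, and by convexity of $\domain{N}$ the segment joining $\vp^k$ to $\vp^\dagger$ then remains in $\domain{N}$, so $N'(\vp^k)^{-1}$ stays available at every step. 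This is the invariance that keeps the representation legitimate for all $k$ and delivers the quadratic convergence $\vp^k\to\vp^\dagger$.

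The step I expect to be the main obstacle is matching the directionality of \eqref{eq:wrnmi} to the error formula: as stated, the Newton--Mysovskii inequality carries the inverse at the \emph{second} argument, whereas the error representation needs the inverse at $\vp^k$, so one must either reparametrize the integral or, more transparently, split $C_N$ into the inverse bound $C_I$ and the Lipschitz modulus $C_L$ and estimate directly as above. The second recurring difficulty is the domain bookkeeping: both the invertibility hypothesis and the integral identity are only available where the iterates and the connecting segments lie in $\domain{N}$, so the ball invariance and the contraction estimate must be established simultaneously by induction rather than in separate passes.
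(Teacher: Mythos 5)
Your proposal is correct and takes essentially the same route as the paper's own argument, namely the proof of \autoref{th:deupot92dg} (which the paper explicitly invokes for this statement): the error representation $\vp^{k+1}-\vp^\dagger = N'(\vp^k)^{-1}\bigl(N'(\vp^k)(\vp^k-\vp^\dagger)-(N(\vp^k)-N(\vp^\dagger))\bigr)$, a Taylor/Lipschitz estimate yielding $\norm{\vp^{k+1}-\vp^\dagger}_\bP \le \tfrac{C_IC_L}{2}\norm{\vp^k-\vp^\dagger}_\bP^2$, and a single induction giving ball invariance and quadratic convergence simultaneously. Your decision to split $C_N=C_IC_L$ into an inverse bound and a Lipschitz modulus --- rather than apply \autoref{eq:wrnmi} verbatim, whose inverse sits at the wrong point for this error formula --- is precisely how the paper itself obtains its Newton--Mysovskii condition \autoref{eq:wrnm} and the constant appearing in \autoref{eq:h}, and the residual imprecision you inherit (invariance is established for $\overline{\mathcal{B}(\vp^\dagger;\rho)}$ while \autoref{eq:h} only guarantees $\overline{\mathcal{B}(\vp^0;\rho)}\subseteq\domain{N}$) is present in the paper's own proof as well.
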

Now, we turn to the case that $N$ is a decomposition operator.

\subsection{Newton-Mysovskii conditions with composed operator}
Now, we study the case of convergence of Gauss-Newton's methods where $N: \bP \to \bY$ with $\bP = \R^{n_*}$ and $\bY$ is an infinite dimensional Hilbert space, where $F:\bX \to \bY$ is linear and bounded and $\Psi:\bP = \R^{n_*} \to \bX$. In this case the Moore-Penrose inverse, or even more general the outer inverse, replaces the inverse in a classical Newton's method (see \autoref{eq:newton_invert}), because linearizations of $N$ will not be invertible anymore as a simple count of dimensions show. We refer now to Gauss-Newton's methods if the linearizations might not be invertible to distinguish between classical Newton's methods also by name. 

Before we phrase a convergence result for Gauss-Newton's methods we recall and introduce some definitions:
\begin{notation}[Inner, outer and Moore-Penrose inverse] \label{not:inverse} (see \cite{Nas76,Nas87}) Let $L: \bP \to \bY$ be a linear and bounded operator mapping between two vector spaces $\bP$ and $\bY$. Then 
	\begin{enumerate}
		\item the operator $B: \bY \to \bP$ is called a \emph{left inverse} to $L$ if 
		$$ B L = I\;. $$
		\item $B:\bY \to \bP$ is called a \emph{right inverse} to $L$ if 
		$$ L B = I\;. $$
		Left and right inverses are used in different context: 
		\begin{itemize}
			\item For a left inverse the nullspace of $L$ has to be trivial, in contrast to $B$.
			\item For a right inverse the nullspace of $B$ has to be trivial.
		\end{itemize}
		\item $B:\bP \to \bP$ is called a \emph{inverse} to $L$ if $B$ is a right and a left inverse.
		\item $B:\bP \to \bY$ is an \emph{outer inverse} to $L$ if 
		\begin{equation} \label{eq:outer}
			BLB = B.
		\end{equation}
		\item Let $\bP$ and $\bY$ be Hilbert-spaces, $L: \bP \to \bY$ be a linear bounded operator.  We denote the orthogonal projections
		$P$ and $Q$ onto $\mathcal{N}(L)$, the nullspace of $L$ (which is closed), and $\overline{\range{L}}$, the closure of the range of $L$:
		That is for all $\vp \in \bP$ and $\by \in \bY$ we have
		\begin{equation} \label{eq:proj}
			P\vp = \text{argmin} \set{\norm{\vp\!_1-\vp}_\bP: \vp\!_1 \in \mathcal{N}(L)} \text{ and }  
			Q\by = \text{argmin} \set{\norm{\by_1-\by}_\bY: \by_1 \in \overline{\mathcal{R}(L)}}. 
		\end{equation}
        We therefore have
		\begin{equation*}
			\begin{aligned}
				P: \bP & \to \mathcal{N}(L) \dot{+} \mathcal{N}(L)^\bot\,\\
				\vp &\mapsto P \vp + 0
			\end{aligned} \quad \text{ and }\quad
			\begin{aligned}
				Q: \bY & \to \overline{\range{L}} \dot{+} \range{L}^\bot. \\
				\by &\to Q\by + 0
			\end{aligned} 
		\end{equation*}
		$B:\domain{B} \subseteq \bY \to \bP$ with $\domain{B}:= \range{L} \dot{+} \range{L}^\bot$ is called the \emph{Moore-Penrose inverse} of $L$ if the following identities hold
		\begin{equation} \label{eq:MP} 
			\begin{aligned}
				LBL &=L,\\
				BLB &=B,\\
				BL &=I-P,\\
				LB &= Q|_{\domain{B}}.
			\end{aligned}
		\end{equation}
	\end{enumerate}	
\end{notation}
In coding theory it is often stated that the range of a neural network operator $\Psi$ forms a manifold in $\bX$, a space, which contains the natural images. 
This is the basis of the following definition making use of the Moore-Penrose inverse.
\begin{definition}[Lipschitz-differentiable immersion] \label{de:immersion} Let $\Psi: \domain{\Psi} \subseteq \bP =\R^{n_*}\to \bX$
	where $\domain{\Psi}$ is open, non-empty, convex and $\bX$ is a seperable (potentially infinite dimensional) Hilbert-space. 
	\begin{enumerate}
		\item \label{it1:immersion}
		We assume that $\mathcal{M}:=\Psi(\domain{\Psi})$ is a $n_*$-dimensional \emph{submanifold} in $\bX$: 
		\begin{itemize}
			\item Let for all $\vp = (p_i)_{i=1}^{n_*} \in \domain{\Psi}$ denote with $\Psi'(\vp)$  the Fr\'echet-derivative of $\Psi$:
			\begin{equation*}
				\begin{aligned}
					\Psi'(\vp): \bP & \to \bX,\\
					\vq = (q_i)_{i=1}^{n_*} &\mapsto \begin{pmatrix}
						\partial_{p_i} \Psi(\vp)
					\end{pmatrix}_{i=1,\ldots,n_*} \vq.
				\end{aligned}
			\end{equation*}
			Here $\begin{pmatrix} \partial_{p_i} \Psi(\vp) \end{pmatrix}_{i=1,\ldots,n_*}$ denotes the vector of functions consisting of all partial derivatives with respect to $\vp$. In differential geometry notation this coincides with the \emph{tangential mapping} $T_{\vp} \Psi$. However, the situation is slightly different here  because $\bX$ can be infinite dimensional.
			
			\item The \emph{representation mapping} of the derivative
			\begin{equation*}
				\begin{aligned}
					\Psi': \domain{\Psi} & \to \bX^{n_*},\\
					\vp &\mapsto \begin{pmatrix}
						\partial_{p_i} \Psi(\vp)
					\end{pmatrix}_{i=1,\ldots,n_*}.
				\end{aligned}
			\end{equation*}
			has always the same rank $n_*$ in $\domain{\Psi}$, meaning that all elements of $\partial_{\vp} \Psi(\vp)$ are linearly independent. 
			This assumption means, in particular, that $\Psi$ is an \emph{immersion} and $\mathcal{M}$ is a submanifold. 
		\end{itemize}
		\item \label{it2:immersion} We define 
		\begin{equation} \label{eq:bP} \begin{aligned}
				P_\vp : \bX &\to \bX_\vp:=\text{span}\set{\partial_{p_i} \Psi(\vp): i=1,\ldots,n_*},\\
				\bx &\mapsto P_\vp \bx := \text{argmin} \set{\norm{\bx_1-\bx}_\bX : \bx_1 \in \bX_\vp}
		\end{aligned} \end{equation}
		as the projection from 
		$$\bX = \bX_\vp \dot{+} 
		\bX_\vp\!\!\!\!{}^\bot$$ 
		onto $\bX_\vp$, which is well-defined by the closedness of the finite dimensional subspace $\bX_\vp$.
		
		Next we define the inverse of $\Psi'(\vp)$ on $\bX_\vp$:
		\begin{equation*}
			\begin{aligned}
				\Psi'(\vp)^{-1} : \text{span} \set{\partial_{p_i} \Psi(\vp):i=1,\ldots,n_*}  \to \bP,\\
				\bx = \sum_{i=1}^{n_*} x_i \partial_{p_i} \Psi(\vp) \mapsto (x_i)_{i=1}^{n_*}
			\end{aligned} 
		\end{equation*} 
		and consequently on $\bX$
		\begin{equation} \label{eq:MP_Penrose}
			\begin{aligned}
				\Psi'(\vp)^\dagger : \bX = \bX_\vp \dot{+} \bX_\vp\!\!\!\!{}^\bot \to \bP,\\
				\bx = (\bx_1,\bx_2) \mapsto \Psi'(\vp)^{-1} \bx_1
			\end{aligned}
		\end{equation}
		which are both well-defined because we assume that $\Psi$ is an immersion.
		Note that $x_i$, $i=1,\ldots,n_*$ are not necessarily the coordinates with respect to an orthonormal system in $\text{span} \set{\partial_{p_i} \Psi(\vp):i=1,\ldots,n_*}$.
		\item \label{it3:immersion} Finally, we assume that the operators $\Psi'(\vp)$ 
		are locally bounded and locally Lipschitz-continuous in $\domain{\Psi}$.
		That is 
		\begin{equation} \label{eq:cl}
			\begin{aligned}
				\norm{\Psi'(\vp)-\Psi'(\vq)}_{\bP \to \bX}
				\leq C_L \norm{\vp-\vq}_{\bP} \quad 
				\norm{\Psi'(\vp)}_{\bP \to \bX}
				\leq C_I \text{ for } \vp, \vq \in \domain{\Psi}.	
			\end{aligned}
		\end{equation}
		If $\Psi$ satisfies these three properties we call it a \emph{Lipschitz-differentiable immersion}.
	\end{enumerate}
\end{definition}
The following lemma is proved by standard means:
\begin{lemma} \label{le:MPN} For a Lipschitz-differentiable immersion 
	\begin{itemize}
		\item the function $\Psi'(\vp)^\dagger: \bX \to \bP$ is in fact the Moore-Penrose inverse of $\Psi'(\vp)$ and 
		\item for every point $\vp \in \domain{\Psi} \subseteq \bP$ there exists a non-empty closed neighborhood where $\Psi'(\vp)^\dagger$ 
		is uniformly bounded and it is Lipschitz-continuous; That is		
		\begin{equation} \label{eq:clb}
			\begin{aligned}
					\norm{\Psi'(\vp)^\dagger-\Psi'(\vq)^\dagger}_{\bX \to \bP} 
				\leq C_L \norm{\vp-\vq}_{\bP}, \quad 
				\norm{\Psi'(\vp)^\dagger}_{\bX \to \bP}
				\leq C_I \text{ for } \vp, \vq \in \domain{\Psi}.	
			\end{aligned}
		\end{equation}
		\item Moreover, the operator $P_\vp$ from \autoref{eq:bP} is bounded. 
	\end{itemize}
\end{lemma}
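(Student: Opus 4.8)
The plan is to treat the three assertions in increasing order of difficulty, exploiting throughout that, by the immersion hypothesis \textit{(i)} of \autoref{de:immersion}, the operator $A:=\Psi'(\vp):\bP=\R^{n_*}\to\bX$ has full rank $n_*$ and is therefore injective with finite-dimensional --- hence closed --- range $\range{A}=\bX_\vp$.

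First, for the identification of $\Psi'(\vp)^\dagger$ with the Moore--Penrose inverse, I would note that injectivity gives $\mathcal{N}(A)=\set{0}$, so the projection $P$ in \autoref{eq:MP} vanishes, while closedness of $\bX_\vp$ gives $\overline{\range{A}}=\bX_\vp$ and $Q=P_\vp$. I would then rewrite the definition \autoref{eq:MP_Penrose} as $\Psi'(\vp)^\dagger=A^{-1}P_\vp$, where $A^{-1}:\bX_\vp\to\bP$ inverts the bijection $A:\bP\to\bX_\vp$, equivalently as the normal-equation form $\Psi'(\vp)^\dagger=(A^*A)^{-1}A^*$, the $n_*\times n_*$ Gram matrix $A^*A$ being invertible precisely by the full-rank assumption. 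From $A^\dagger A=I$ and $AA^\dagger=A(A^*A)^{-1}A^*=P_\vp$ the four identities of \autoref{eq:MP} follow directly, which settles the first bullet. The third bullet is immediate: $P_\vp$ is the orthogonal projection onto a closed subspace, so $\norm{P_\vp}_{\bX\to\bX}\le 1$.

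The substance is the second bullet. For the uniform bound I would use $\Psi'(\vp)^\dagger=G(\vp)^{-1}A^*$ with $G(\vp):=\Psi'(\vp)^*\Psi'(\vp)$. Since $G(\vp)$ is symmetric positive definite, its smallest eigenvalue $\lambda_{\min}(\vp)>0$; because $\vp\mapsto\Psi'(\vp)$ is continuous (indeed Lipschitz by \autoref{eq:cl}), the entries $\inner{\partial_{p_i}\Psi(\vp)}{\partial_{p_j}\Psi(\vp)}_\bX$ depend continuously on $\vp$, and hence so does $\lambda_{\min}$. Restricting to a closed ball $\overline{\mathcal{B}(\vp;r)}\subseteq\domain{\Psi}$, which is compact in $\R^{n_*}$, yields a uniform lower bound $\lambda_{\min}\ge\mu>0$, whence $\norm{G(\vp)^{-1}}\le\mu^{-1}$ and $\norm{\Psi'(\vp)^\dagger}_{\bX\to\bP}\le\mu^{-1}\norm{A^*}\le\mu^{-1}C_I$ on that neighborhood. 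For Lipschitz continuity I would combine three estimates. First, $G$ is Lipschitz: writing each entry difference as $\inner{a-c}{b}+\inner{c}{b-d}$ and inserting $\norm{\partial_{p_i}\Psi(\vp)}\le C_I$ together with $\norm{\partial_{p_i}\Psi(\vp)-\partial_{p_i}\Psi(\vq)}\le C_L\norm{\vp-\vq}_\bP$ (both coming from \autoref{eq:cl} applied to the columns $\Psi'(\cdot)e_i$) gives $\norm{G(\vp)-G(\vq)}\le 2n_*C_IC_L\norm{\vp-\vq}_\bP$. Second, the resolvent identity $G(\vp)^{-1}-G(\vq)^{-1}=G(\vp)^{-1}(G(\vq)-G(\vp))G(\vq)^{-1}$ together with the uniform bound $\mu^{-1}$ makes $G^{-1}$ Lipschitz. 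Third, $A^*$ is bounded and Lipschitz by \autoref{eq:cl}. The product rule then yields \autoref{eq:clb} with a constant that, after possibly shrinking the neighborhood and relabeling, may be absorbed into the stated $C_L$ and $C_I$.

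The main obstacle is precisely the passage to a uniform bound: the immersion assumption guarantees invertibility of $A^*A$ (positivity of the smallest singular value of $\Psi'(\vp)$) only pointwise, and converting this into a uniform lower bound $\mu$ over a whole neighborhood is where the finite dimensionality of $\bP=\R^{n_*}$ (compactness of closed balls) and the continuity of $\Psi'$ are essential; everything else is bookkeeping of Lipschitz constants. A more self-contained alternative avoiding $G^{-1}$ is the perturbation formula $B^\dagger-A^\dagger=-B^\dagger(B-A)A^\dagger+(B^*B)^{-1}(B-A)^*(I-AA^\dagger)$, valid here because both operators have trivial kernel, from which $\norm{B^\dagger-A^\dagger}\le 2\mu^{-1}\norm{B-A}$ follows at once using $\norm{A^\dagger},\norm{B^\dagger}\le\mu^{-1/2}$ and $\norm{I-AA^\dagger}\le 1$; I would nonetheless keep the Gram-matrix route as the primary argument since it renders the uniform-bound step most transparent.
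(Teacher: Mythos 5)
Your proposal is correct, and on the substantive second bullet it takes a genuinely different route from the paper. For the first bullet the paper verifies the four identities of \autoref{eq:MP} directly from the definition \autoref{eq:MP_Penrose} (with $P=0$, $Q=P_\vp$), whereas you route through the normal-equation representation $\Psi'(\vp)^\dagger=(A^*A)^{-1}A^*$; this is a cosmetic difference, and the third bullet (orthogonal projectors have norm at most $1$) is identical. The real divergence is in the boundedness and Lipschitz estimates: the paper writes $\bx_1=\vx^T\bE$ for the function-valued matrix $\bE=(\partial_{p_1}\Psi(\vp),\ldots,\partial_{p_{n_*}}\Psi(\vp))$, runs Gram--Schmidt to get a QR-decomposition, and argues that the Gram--Schmidt and QR algorithms (hence $Q$, $R$, and $R^{-1}$, the latter because $R$ is triangular) are locally Lipschitz in the matrix entries as long as the columns stay linearly independent. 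You instead work with the Gram matrix $G(\vp)=\Psi'(\vp)^*\Psi'(\vp)$, extract a uniform spectral lower bound $\lambda_{\min}\geq\mu>0$ on a compact closed ball inside $\domain{\Psi}$, and then get Lipschitz continuity of $G^{-1}$ from the resolvent identity and of $\Psi'(\cdot)^\dagger$ from the product rule. Your version buys explicit constants (e.g. $2n_*C_IC_L$ for $G$, $\mu^{-2}$ for $G^{-1}$) and makes fully explicit the one step the paper leaves implicit, namely that pointwise invertibility is upgraded to a uniform bound only through continuity plus compactness of closed balls in $\R^{n_*}$; the paper's version is more algorithmic in flavor but, as written, asserts rather than proves the Lipschitz stability of Gram--Schmidt/QR. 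Both arguments rest on the same two pillars, the bounds \autoref{eq:cl} and the linear independence of the $\partial_{p_i}\Psi(\vp)$, so neither is more general than the other; yours is the more self-contained and quantitative write-up, and the Wedin-type perturbation identity you mention would even eliminate the detour through $G^{-1}$ entirely.
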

\begin{proof} 
	\begin{itemize}
		\item 
	We verify the four conditions \autoref{eq:MP} with 
	\begin{itemize}
		\item $L = \Psi'(\vp): \bP = \R^{n_*} \to \bX$, $B = \Psi'(\vp)^\dagger : \bX \to \bP$, with  $\domain{B}=\domain{\Psi'(\vp)^\dagger} = \bX$ and 
		\item $P: \bP \to \bP$ the zero-operator and $Q=P_\vp:\bX \to \bX_\vp$, the projection operator onto $\bX_\vp$	
		(see \autoref{eq:bP}).
	\end{itemize}
	\begin{itemize}
		\item First we prove the third identity with $P=0$ in \autoref{eq:MP}: This follows from the fact that for all $\vq=(q_i)_{i=1}^{n_*} \in \bP$ we have 
	\begin{equation} \label{eq:id1}
		\Psi'(\vp)^\dagger\Psi'(\vp) \vq = \Psi'(\vp)^{-1} \left( \sum_{i=1}^{n_*} q_i \partial_{p_i} \Psi(\vp) \right) = (q_i)_{i=1}^{n_*} = \vq.
	\end{equation}
	\item For the forth identity we see that for all $\bx =(\bx_1,\bx_2) \in \bX$ there exists $x_i$, $i=1,\ldots,n_*$ (because 
	$\partial_{p_i} \Psi(\vp)$, $i=1,\ldots,n_*$ is a basis) such that
	$$ \bx = \sum_{i=1}^{n_*} x_i \partial_{p_i} \Psi(\vp) + \bx_2 \text{ with } \bx_2 \in \bX_\vp\!\!\!\!{}^\bot$$
	and thus
	$$P_\vp \bx =  \sum_{i=1}^{n_*} x_i \partial_{p_i} \Psi(\vp) $$ and therefore 
	\begin{equation} \label{eq:id1a}
	\Psi(\vp)^\dagger \bx = (x_i)_{i=1}^{n_*} = \vx.
	\end{equation}
	Consequently, we have 
	\begin{equation}\label{eq:id2}
		\Psi'(\vp)\Psi'(\vp)^\dagger\bx = \Psi'(\vp) \vx  =P_\vp \bx .
	\end{equation}
	\item 
	For the second identity we use that for all $\bx \in \bX$ 
	\begin{equation} \label{eq:id2a}
		\begin{aligned}
		\Psi'(\vp)^\dagger\Psi'(\vp)\Psi'(\vp)^\dagger\bx &\!\!\!\! \!\underbrace{=}_{\autoref{eq:id1a}} \!\!\!\! \!\Psi'(\vp)^\dagger \Psi'(\vp) \left( \vx \right) \!\!\!\! \! \underbrace{=}_{\autoref{eq:id1}} \!\!\!\! \! \vx
		&\underbrace{=}_{\autoref{eq:id1a}} \Psi(\vp)^\dagger \bx.
		\end{aligned}
	\end{equation}
	\item The first identity is proven analogously. 
\end{itemize}Thus the Moore-Penrose inverse exists.
	\item For the proof of the boundedness we argue as follows:
	First note that
	\begin{equation*}
		\begin{aligned}
			\norm{\Psi'(\vp)^\dagger}_{\bX \to \bP} &= \sup_{\set{\bx \in \bX: \norm{\bx}_\bX=1}} \norm{\Psi'(\vp)^\dagger \bx}_{\bP} 
			=  \sup_{\set{\bx_1 \in \bX_\vp: \norm{\bx_1}=1}} \norm{\Psi'(\vp)^{-1} \bx_1}_{\bP} \\
			&= \sup_{\set{\bx_1 \in \bX_\vp: \norm{\bx_1}=1}} \norm{ \vx_1}_{\bP},
		\end{aligned}
	\end{equation*}
where we used the representation 
\begin{equation} \label{eq:z}
\bx_1 = \sum_{i=1}^{n_*} x_i \be_i \in \bX_\vp \text{ with } \bE := (\be_1=\partial_{p_1} \Psi(\vp),\ldots,\be_{n_*}=\partial_{p_{n_*}} \Psi(\vp)).
\end{equation} 
    It remains to derive a uniform bound and the continuity for $\norm{\vx_1}_{\bP}$ with respect to variations of
    $\vp$. For this purpose we use 
    the Gram-Schmidt inductive procedure to obtain the QR-decomposition of the function valued matrix $\bE$.

    \begin{itemize}
    	\item Because of the assumption \autoref{eq:cl}, each vector $\be_j$, $j=1,\ldots,n_*$, depends continuously of $\vp$.
    	\item The Gram-Schmidt procedure performs only additions, multiplications and division and inner products of vectors of functions, which are all locally Lipschitz-continuous with respect to perturbations as long as the vectors are linearly independent. Thus by chain-rule Gram-Schmidt is Lipschitz-continuous in the matrix entries as long as the vectors $\be_j$, $j=1,\ldots,n_*$ are linearly dependent. 
    	Gram-Schmidt produces an orthogonal family $\hat{\be}_j$, $j=1,\ldots,n_*$, which therefore are also
    	 Lipschitz-continuous with resepct to $\vp$.
    	\item 
    Calculating the QR-decomposition of $\bE$ we get 
    \begin{equation*}
    	\bx_1 = \vx^T \bE = \vx^T Q R \text{ where } R = \begin{pmatrix}
    		\inner{\hat{\be}_1}{\be_1}_\bY & \cdots & \cdots & \cdots & \inner{\hat{\be}_1}{\be_n}_\bY\\
    		0 & \inner{\hat{\be}_2}{\be_2}_\bY & \cdots & \cdots & \inner{\hat{\be}_2}{\be_n}_\bY\\
    		\vdots & \ddots & \ddots & & \\
    		\vdots & & \ddots & \ddots & \\
    		0 & \cdots & \cdots & 0 &  \inner{\hat{\be}_n}{\be_n}_\bY
    	\end{pmatrix},
    \end{equation*} 
    or in other word 
    \begin{equation*}
    	\bx_1 R^{-1} Q^T = \vx^T.
    \end{equation*}
    The QR-algorithm is also Lipschitz-continuous with respect to perturbations in the matrix entries, that with respect to $\vp$, as long as the single vectors do not get linearly dependent. Therefore
    $Q$ and $R$ are Lipschitz-continuous as well, and therefore, since $R$ is a triangular matrix, also $R^{-1}$. 
    This guarantees a local uniformly bound and local Lipschitz-continuity for $\Psi$.
\end{itemize}
	\item The boundedness of $P_\vp$ is obvious because it is an orthogonal projector. 
\end{itemize}
\end{proof}
In the following we study a Gauss-Newton's method for solving \autoref{eq:ip_2}, where $\Psi: \domain{\Psi} \subseteq \R^{n_*} \to \bX$ is a Lipschitz-continuous immersion (see \autoref{de:immersion}), 
$F:\bX \to \bY$ is bounded and $N=F \circ \Psi$.

\begin{lemma} \label{le:dec} Let $F: \bX \to \bY$ be linear, bounded, with trivial nullspace and dense range. Moreover, let $\Psi:\domain{\Psi} \subseteq \R^{n_*} \to \bX$ be a Lipschitz-differentiable immersion; see \autoref{de:immersion}, and let $N = F \circ \Psi$. 
	
	Then $\domain{N}=\domain{\Psi}$ and for every 
	$\vp \in \domain{N}$ the derivative of the operator $N$ at a point $\vp$ has a Moore-Penrose inverse $N'(\vp)^\dagger$, 
	which satisfies:
	\begin{itemize}
		\item Decomposition property of the Moore-Penrose inverse:
		\begin{equation} \label{eq:MPa}
			N'(\vp)^\dagger \bz = \Psi'(\vp)^\dagger F^{-1} \bz \text{ for all } \vp \in \domain{N}, \bz \in \range{F} \subseteq \bY.
		\end{equation}
		In particular this means that
		\begin{equation}\label{eq:MPb}
			N'(\vp)^\dagger N'(\vp) = I \text{ on } \R^{n_*} \text{ and } 
			N'(\vp) N'(\vp)^\dagger = Q|_{\range{FP_\vp}},
		\end{equation}
	    where $I$ denotes the identity operator on $\R^{n_*}$ and $Q: \bY \to \overline{\range{FP_\vp}} \dot{+} \range{FP_\vp}^\bot$, respectively.
		\item Generalized Newton-Mysovskii condition:
		\begin{equation} \label{eq:wrnm} \begin{aligned}
			\norm{N'(\vp)^\dagger(N'(\vq+s(\vp-\vq)-N'(\vq))(\vp-\vq)}_{\bP}  
			&\leq s C_I C_L \norm{\vp-\vq}_\bP^2\\
			& \quad \vp, \vq \in \domain{N}, s \in [0,1]\;.
			\end{aligned}
		\end{equation}
	    We recall that the Lipschitz-constants $C_I$ and $C_L$ are defined in \autoref{eq:cl}.
	\end{itemize}
\end{lemma}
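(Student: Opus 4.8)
The plan is to reduce everything to two ingredients already in hand: the chain rule $N'(\vp)=F\Psi'(\vp)$ and the properties of $\Psi'(\vp)^\dagger$ recorded in \autoref{le:MPN}.

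First I would settle the domain and the structure of the linearization. Since $F$ is bounded and everywhere defined on $\bX$, composing with $\Psi$ does not shrink the domain, so $\domain{N}=\domain{\Psi}$, and the chain rule gives $N'(\vp)=F\Psi'(\vp)$ for every $\vp\in\domain{N}$. Because $F$ has trivial nullspace and $\Psi'(\vp)$ is injective (as $\Psi$ is an immersion, see \autoref{de:immersion}), $N'(\vp)$ is injective; its range is $F(\bX_\vp)=\range{FP_\vp}$, a finite-dimensional and hence closed subspace of $\bY$. Consequently $N'(\vp)$ does possess a genuine Moore--Penrose inverse, and the remaining task is to identify its action.

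Next I would verify that $B:=\Psi'(\vp)^\dagger F^{-1}$, defined on the dense subspace $\range{F}$ with $F^{-1}$ the left inverse of the injective $F$, realises that inverse, by checking the four identities \autoref{eq:MP} with $L=N'(\vp)$, $P=0$ (trivial nullspace) and $Q$ the orthogonal projection onto $\overline{\range{FP_\vp}}$. The third identity is the load-bearing one: using $F^{-1}F=I$ on $\bX$ together with $\Psi'(\vp)^\dagger\Psi'(\vp)=I$ from \autoref{eq:id1} one gets $BL=\Psi'(\vp)^\dagger F^{-1}F\Psi'(\vp)=\Psi'(\vp)^\dagger\Psi'(\vp)=I$, which is the first part of \autoref{eq:MPb}. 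For the fourth identity, $\Psi'(\vp)\Psi'(\vp)^\dagger=P_\vp$ from \autoref{eq:id2} yields $LB=FP_\vp F^{-1}$; restricted to $\bz\in\range{FP_\vp}$ one has $F^{-1}\bz\in\bX_\vp$, so $FP_\vp F^{-1}\bz=\bz=Q\bz$, the second part of \autoref{eq:MPb}. The identities $LBL=L$ and $BLB=B$ then follow by composing these two relations, so $B=N'(\vp)^\dagger$, which is precisely \autoref{eq:MPa}.

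The main obstacle, and the reason the second statement of \autoref{eq:MPb} is phrased only on $\range{FP_\vp}$, is that $F$ need not respect the Hilbert-space geometry: the operator $FP_\vp F^{-1}$ is in general the \emph{oblique} projection onto $F(\bX_\vp)$ along $F(\bX_\vp^{\bot})$, and it coincides with the orthogonal projection $Q$ only on $\range{FP_\vp}$, where both act as the identity. Moreover $F^{-1}$ is unbounded on the non-closed range of $F$, so the decomposition formula \autoref{eq:MPa} must be read on $\range{F}$ rather than on all of $\bY$. Since what propagates into the convergence analysis is the clean left-inverse relation $N'(\vp)^\dagger N'(\vp)=I$, I would take care to establish that identity unconditionally and treat the orthogonal-projection claim as a statement restricted to $\range{FP_\vp}$.

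Finally, the generalized Newton--Mysovskii estimate \autoref{eq:wrnm} follows by the same cancellation. Writing $\vp_s:=\vq+s(\vp-\vq)$, the increment $(N'(\vp_s)-N'(\vq))(\vp-\vq)=F\bigl(\Psi'(\vp_s)-\Psi'(\vq)\bigr)(\vp-\vq)$ lies in $\range{F}$, so applying $N'(\vp)^\dagger=\Psi'(\vp)^\dagger F^{-1}$ and cancelling $F^{-1}F=I$ reduces the left-hand side to $\norm{\Psi'(\vp)^\dagger(\Psi'(\vp_s)-\Psi'(\vq))(\vp-\vq)}_\bP$. The bound $\norm{\Psi'(\vp)^\dagger}_{\bX\to\bP}\le C_I$ from \autoref{eq:clb}, the Lipschitz estimate $\norm{\Psi'(\vp_s)-\Psi'(\vq)}_{\bP\to\bX}\le C_L\norm{\vp_s-\vq}_\bP=sC_L\norm{\vp-\vq}_\bP$ from \autoref{eq:cl}, and submultiplicativity then give the asserted bound $sC_IC_L\norm{\vp-\vq}_\bP^2$.
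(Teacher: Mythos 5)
You follow in substance the same route as the paper: write $N'(\vp)=F\Psi'(\vp)$, verify the four Moore--Penrose identities of \autoref{eq:MP} for $L=N'(\vp)$ and $B=\Psi'(\vp)^\dagger F^{-1}$ with $P=0$, and obtain \autoref{eq:wrnm} by cancelling $F^{-1}F$ and invoking the bounds \autoref{eq:cl} and \autoref{eq:clb}; your final paragraph is the paper's computation. The interesting divergence is at the fourth identity. The paper's proof disposes of it by asserting (its \autoref{eq:id4}) that $Q(F\bx)=FP_\vp\bx$ for every $\bx\in\bX$, i.e.\ that $QF(I-P_\vp)\bx=0$; this would require $F$ to map $\bX_\vp^{\bot}$ into $\range{FP_\vp}^{\bot}$, which a bounded injective operator with dense range has no reason to do. You correctly identify $LB=FP_\vp F^{-1}$ as an \emph{oblique} projection onto $F(\bX_\vp)$ and therefore assert $LB=Q$ only on $\range{FP_\vp}$, where both sides act as the identity. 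On this point your argument is more careful than the paper's own proof.

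However, you cannot then conclude ``so $B=N'(\vp)^\dagger$, which is precisely \autoref{eq:MPa}'': once the fourth identity is conceded to fail off $\range{FP_\vp}$, $B$ is not the Moore--Penrose inverse in the sense of \autoref{not:inverse}, and indeed \autoref{eq:MPa} is false in general. Take $\bX=\bY=\R^2$, $F(x_1,x_2)=(x_1+x_2,x_2)$ and $\Psi'(\vp):t\mapsto(t,0)$, so that $\bX_\vp=\text{span}\set{(1,0)}$ and $N'(\vp)t=(t,0)$. For $\bz=F(0,1)=(1,1)\in\range{F}$ one gets $\Psi'(\vp)^\dagger F^{-1}\bz=\Psi'(\vp)^\dagger(0,1)=0$, whereas the genuine Moore--Penrose inverse of $N'(\vp)$ (which exists and is everywhere defined, since $\range{N'(\vp)}$ is finite dimensional, as you observe) returns the least-squares solution $\text{argmin}_t\norm{(t,0)-(1,1)}_\bY=1$; the same example refutes the paper's \autoref{eq:id4}, since $Q\bz=(1,0)\neq(0,0)=FP_\vp(0,1)$. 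What your computations actually establish is that $B$ is a left inverse of $N'(\vp)$ on $\range{F}$ (hence $LBL=L$ and $BLB=B$ hold, i.e.\ $B$ is an inner and outer generalized inverse) together with \autoref{eq:wrnm} and the first part of \autoref{eq:MPb}. As you point out, this is exactly what the convergence proof of \autoref{th:deupot92dg} consumes, and it places the analysis in the weaker ``outer inverse'' framework of the paper's remark following that theorem. So your proof is sound for the convergence-relevant content, but your concluding identification of $B$ with the Moore--Penrose inverse --- like the paper's lemma and the paper's own proof --- overclaims; the honest statement is the restricted one you yourself formulate, with \autoref{eq:MPa} and the second half of \autoref{eq:MPb} weakened accordingly.
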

\begin{proof}
	First of all, we note that
	\begin{equation*}
		N'(\vp) = F \Psi'(\vp) \text{ on } \domain{\Psi}=\domain{N}.
	\end{equation*}
	To prove \autoref{eq:MPa} we have to verify \autoref{eq:MP} with 
	$$L := N'(\vp) = F \psi'(\vp): \bP \to \bY \text{ and } B := \Psi'(\vp)^\dagger F^{-1} : \range{F} \subseteq \bY \to \bP.$$
	Note that since we assume that $F$ has dense range we do not need to define and consider $B$ on $\range{F} \dot{+} \underbrace{\range{F}^\bot}_{=\set{0}}$.
	
	Let us first state that with the notation of \autoref{eq:MP} we have for fixed $\vp$:
	\begin{equation*}
		\domain{B}=\domain{\Psi'(\vp)^\dagger F^{-1}}=\range{F} \text{ and } 
		\range{L} = \set{F \Psi'(\vp) \vq : \vq \in \R^{n_*}} = \range{F P_\vp}.
	\end{equation*}
    We use $P \equiv 0$ in \autoref{eq:MP}.
    
    In particular the first item shows that for $\bz = F \bx = F P_\vp \bx + F (I-P_\vp)\bx$ we have 
    \begin{equation} \label{eq:id4}
    	Q \bz = Q(F P_\vp \bx + F (I-P_\vp)(\bx))= F P_\vp \bx.
    \end{equation}
	
	 Applying \autoref{le:MPN} and the invertability of $F$ on the range of $F$ shows that  
	\begin{equation*} 
		\begin{aligned}
			LBL=F \Psi'(\vp)\Psi'(\vp)^\dagger F^{-1} F \Psi'(\vp)&=F \Psi'(\vp)\Psi'(\vp)^\dagger \Psi'(\vp) \underbrace{=}_{\autoref{eq:id1}} F \Psi'(\vp) =L,\\
			BLB = \Psi'(\vp)^\dagger F^{-1} F \Psi'(\vp) \Psi'(\vp)^\dagger F^{-1}&=\Psi'(\vp)^\dagger \Psi'(\vp) \Psi'(\vp)^\dagger F^{-1} \underbrace{=}_{\autoref{eq:id2a}} \Psi'(\vp)^\dagger F^{-1}=B,\\
			BL &=\Psi'(\vp)^\dagger F^{-1} F \Psi'(\vp) \underbrace{=}_{\autoref{eq:id1}} I - P = I \text{ on } \R^{n_*}.
		\end{aligned}
	\end{equation*}
    For the forth identity we take some $\bz \in \range{F}$. Therefore, there exists some $\bx \in \bX$ such that $F\bx=\bz$ and thus
    \begin{equation*}
    	\begin{aligned}
    		LB\bz &= F \Psi'(\vp)\Psi'(\vp)^\dagger F^{-1} \bz = F \Psi'(\vp)\Psi'(\vp)^\dagger \bx
    	 \underbrace{=}_{\autoref{eq:id2}} F P_\vp \bx \underbrace{=}_{\autoref{eq:id4}} Q\bz.
    	\end{aligned}
    \end{equation*}
	Thus \autoref{eq:MP} holds and we know that $N'(\vp)^\dagger$ from \autoref{eq:MPa} is the Moore-Penrose of $N'(\vp)$. The last two identities in fact show \autoref{eq:MPb}.
	
	From \autoref{eq:cl} it follows that
	\begin{equation*} \begin{aligned} \norm{N'(\vp)^\dagger(N'(\vq+s(\vp-\vq)-N'(\vq))(\vp-\vq)}_{\bP} 
			= & \norm{\Psi'(\vp)^\dagger F^{-1}(F\Psi'(\vq+s(\vp-\vq)-F\Psi'(\vq))(\vp-\vq)}_{\bP} \\=&
			\norm{\Psi'(\vp)^\dagger (\Psi'(\vq+s(\vp-\vq)-\Psi'(\vq))(\vp-\vq)}_{\bP} \\
			\leq & C_IC_L s \norm{\vp-\vq}_\bP^2
			\text{ for all } \vp, \vq \in \domain{\Psi} = \domain{N}\;,
		\end{aligned}
	\end{equation*}
    thus \autoref{eq:wrnm}.
\end{proof}
We have now all ingredients to prove a local convergence rates result for a Gauss-Newton's method, where the operator $N$ is the  composition of a linear bounded operator and a Lipschitz-differentiable immersions:
\begin{theorem} \label{th:deupot92dg} Let $F: \bX \to \bY$ be linear, bounded, with trivial nullspace and dense range. Moreover, let 
	$\Psi: \domain{\Psi} \subseteq \bP \to \bX$ be a Lipschitz-differentiable immersion with $\domain{\Psi}$ open, non-empty, and convex. Moreover, $N= F \circ \Psi : \domain{\Psi} \to \bY$. We assume that there exist $\vp^\dagger \in \domain{\Psi}$ that satisfies
	\begin{equation} \label{eq:sol}
		N(\vp^\dagger) = \by.
	\end{equation}
	Moreover, we assume that there exists $\vp^0 \in \domain{\Psi}$, which satisfies \autoref{eq:h}.
	Then, the iterates of the Gauss-Newton's iteration,
	\begin{equation} \label{eq:newton} \begin{aligned}
			\vp^{k+1} = \vp^k - N'(\vp^k)^\dagger(N(\vp^k)-\by) 
			\quad k \in \N_0
		\end{aligned}
	\end{equation}
	are well-defined elements in $\overline{\mathcal{B}(\vp^0,\rho)}$ and converge quadratically to $\vp^\dagger$.
\end{theorem}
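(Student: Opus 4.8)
The plan is to run the classical Newton--Mysovskii induction that underlies \autoref{th:deupot92}, with the Moore--Penrose inverse $N'(\vp^k)^\dagger$ playing the role of the ordinary inverse. Everything needed is packaged in \autoref{le:dec}: the left-inverse identity $N'(\vp)^\dagger N'(\vp)=I$ on $\R^{n_*}$ from \autoref{eq:MPb}, which reproduces the telescoping that drives the quadratic contraction, and the generalized Newton--Mysovskii estimate \autoref{eq:wrnm}, which supplies the quadratic term. Concretely, I would carry out a single induction on $k$ that simultaneously keeps $\vp^k$ inside $\domain{\Psi}$ (so that $N'(\vp^k)^\dagger$ exists and \autoref{eq:newton} is well-defined) and establishes the contraction $\norm{\vp^{k+1}-\vp^\dagger}_\bP\le\tfrac{C_IC_L}{2}\norm{\vp^k-\vp^\dagger}_\bP^2$.

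For the inductive step I would assume $\vp^k\in\domain{\Psi}$. Using $\by=N(\vp^\dagger)$ from \autoref{eq:sol} and rewriting $\vp^k-\vp^\dagger=N'(\vp^k)^\dagger N'(\vp^k)(\vp^k-\vp^\dagger)$ by the left-inverse identity, subtraction in \autoref{eq:newton} gives
\begin{equation*}
  \vp^{k+1}-\vp^\dagger = N'(\vp^k)^\dagger\bigl[N'(\vp^k)(\vp^k-\vp^\dagger)-\bigl(N(\vp^k)-N(\vp^\dagger)\bigr)\bigr].
\end{equation*}
Representing the increment by the fundamental theorem of calculus, $N(\vp^k)-N(\vp^\dagger)=\int_0^1 N'(\vp^\dagger+s(\vp^k-\vp^\dagger))(\vp^k-\vp^\dagger)\d s$---legitimate because convexity of $\domain{\Psi}$ keeps the whole segment $[\vp^\dagger,\vp^k]$ in the domain---turns the bracket into $\int_0^1\bigl[N'(\vp^k)-N'(\vp^\dagger+s(\vp^k-\vp^\dagger))\bigr](\vp^k-\vp^\dagger)\d s$. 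Applying \autoref{eq:wrnm} under the integral---valid once $F$ is cancelled against $F^{-1}$ via $N'(\vp^k)^\dagger N'=\Psi'(\vp^k)^\dagger\Psi'$---bounds the integrand by $s\,C_IC_L\norm{\vp^k-\vp^\dagger}_\bP^2$ (up to relabelling the integration parameter), and $\int_0^1 s\d s=\tfrac12$ yields the claimed quadratic estimate.

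Quadratic convergence then reduces to the scalar recursion $a_{k+1}\le\tfrac{C_IC_L}{2}a_k^2$ for $a_k:=\norm{\vp^k-\vp^\dagger}_\bP$ with $a_0=\rho$. Setting $\varepsilon_k:=\tfrac{C_IC_L}{2}a_k$ turns this into $\varepsilon_{k+1}\le\varepsilon_k^2$ with $\varepsilon_0=h<1$ by \autoref{eq:h}, so $\varepsilon_k\le h^{2^k}$ and hence $a_k\le\rho\,h^{2^k-1}\to0$ at quadratic rate; in particular the sequence $(a_k)$ is nonincreasing and bounded by $\rho$.

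The step I expect to be the real obstacle is not this estimate but the containment bookkeeping inside the induction: every iterate, and every segment $[\vp^\dagger,\vp^k]$ used above, must stay in $\domain{\Psi}$ for $N'(\vp^k)^\dagger$ to exist (\autoref{le:dec}) and for \autoref{eq:wrnm} to apply. The subtlety is that the iterates cluster around $\vp^\dagger$, whereas the hypothesis \autoref{eq:h} is phrased through the ball $\overline{\mathcal{B}(\vp^0;\rho)}$ on whose boundary $\vp^\dagger$ lies (since $\rho=\norm{\vp^\dagger-\vp^0}_\bP$); a point within distance $\rho$ of $\vp^\dagger$ need not sit in that ball. The base case is clean---$\vp^\dagger,\vp^0\in\overline{\mathcal{B}(\vp^0;\rho)}\subseteq\domain{\Psi}$ together with convexity place the initial segment in the domain---so the crux is to propagate admissibility, which I would build directly into the induction hypothesis (using that $a_k$ never increases beyond $\rho$), exactly as is done for the finite-dimensional \autoref{th:deupot92}.
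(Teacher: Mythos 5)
Your proposal runs essentially the same argument as the paper's proof: an induction in which the Moore--Penrose identities of \autoref{le:dec} yield the error representation $\vp^{k+1}-\vp^\dagger = N'(\vp^k)^\dagger\bigl(N(\vp^\dagger)-N(\vp^k)-N'(\vp^k)(\vp^\dagger-\vp^k)\bigr)$, the generalized Newton--Mysovskii bound \autoref{eq:wrnm} integrated along the segment produces the factor $\tfrac{C_IC_L}{2}$, and the resulting contraction gives well-definedness, convergence, and the quadratic rate. The containment subtlety you flag at the end is real but is not resolved by the paper either: its induction actually establishes $\vp^k\in\overline{\mathcal{B}(\vp^\dagger;\rho)}$ (not $\overline{\mathcal{B}(\vp^0;\rho)}$ as the theorem states) and tacitly treats that ball as lying inside $\domain{N}$, which \autoref{eq:h} does not literally guarantee, so your attempt matches the paper's proof in both its structure and its one loose end.
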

\begin{proof}
	First of all note, that $\domain{\Psi} = \domain{N}$ since $F$ is defined all over $\bX$. 
	
	Let $\rho = \norm{\vp^\dagger-\vp^0}_\bP$:
	We prove by induction that $\vp^k \in \overline{\mathcal{B}(\vp^\dagger;\rho)}$ for all $k \in \N_0$.
	\begin{itemize}
		\item For $k=0$ the assertion is satisfied by assumption \autoref{eq:h}.
		\item Let $\vp^k \in \overline{\mathcal{B}(\vp^\dagger;\rho)}$. Using the first condition of \autoref{eq:MP}, which a Moore-Penrose inverse satisfies, we see that
		\begin{equation*}
			N'(\vp^k) N'(\vp^k)^\dagger N'(\vp^k) (\vp^{k+1}-\vp^\dagger) =
			N'(\vp^k)(\vp^{k+1}-\vp^\dagger). 
		\end{equation*}
		The definition of Gauss-Newton's method, \autoref{eq:newton}, and \autoref{eq:sol} then imply that
		\begin{equation*}
			N'(\vp^k) (\vp^{k+1}-\vp^\dagger) =
			N'(\vp^k) N'(\vp^k)^\dagger (N(\vp^\dagger) - N(\vp^k) - N'(\vp^k) (\vp^\dagger-\vp^k)),
		\end{equation*}
		and consequently, using the third identity of \autoref{eq:MP} (note that under the assumptions of this theorem $P=0$, see the proof prior to \autoref{eq:id1}), the second identity of \autoref{eq:MP} and that $F$ is injective, we get 
		\begin{equation*}
			\begin{aligned}
				\vp^{k+1}-\vp^\dagger = N'(\vp^k)^\dagger N'(\vp^k) (\vp^{k+1}-\vp^\dagger) &=
				N'(\vp^k)^\dagger (N(\vp^\dagger) - N(\vp^k) - N'(\vp^k) (\vp^\dagger-\vp^k))\\
				&=
				\Psi'(\vp^k)^\dagger (\Psi(\vp^\dagger) - \Psi(\vp^k) - \Psi'(\vp^k) (\vp^\dagger-\vp^k)).
			\end{aligned}
		\end{equation*}
		From the Newton-Mysovskii condition \autoref{eq:wrnm} and \autoref{eq:h} it then follows that 
		\begin{equation}\label{eq:help}
			\begin{aligned}
				\norm{\vp^{k+1}-\vp^\dagger}_\bP \leq \frac{C_IC_L}{2} \norm{\vp^k - \vp^\dagger}^2_\bP 
				\leq \frac{C_IC_L\rho}{2} \norm{\vp^k - \vp^\dagger}_\bP < \norm{\vp^k - \vp^\dagger}_\bP \\
				\text{ or } 
				\norm{\vp^{k+1}-\vp^\dagger}_\bP = \norm{\vp^k - \vp^\dagger}_\bP=0.
			\end{aligned}
		\end{equation}
		This, in particular shows that $\vp^{k+1} \in \mathcal{B}(\vp^\dagger;\rho)$, thus the well-definedness of the Gauss-Newton's iterations in the 
		closed ball.
	\item 
	Using \autoref{eq:help} we then get, since $h = C_I C_L \rho/2 < 1$, that
	\begin{equation*}
		\norm{\vp^{k+1}-\vp^\dagger}_\bP \leq h^{k+1} \norm{\vp^0 - \vp^\dagger}_\bP \leq h^{k+1} \rho,
	\end{equation*}
	which converges to $0$ for $k \to \infty$. 
	\item Convergence and the first inequality of \autoref{eq:help} imply quadratic convergence.
	\end{itemize}
\end{proof}
\begin{remark} Based on the assumption of an immersion we have shown in \autoref{le:dec} that $\Psi(\vp)^\dagger F^{-1}$ is the Moore-Penrose inverse of $N=F \Psi(\vp)$.
	In order to prove (quadratic) convergence of Gauss-Newton's methods one only requires an \emph{outer inverse} (see \autoref{not:inverse}). Following \cite{NasChe93} (see also \cite{Hae86}) the analysis of Gauss-Newton's method could be based on \emph{outer inverses}, which is more general than for the Moore-Penrose inverse (compare \autoref{eq:outer} and \autoref{eq:MP}). However, it is nice to actually see that $N'(\vp)^\dagger$ is a Moore-Penrose inverse, which is the novelty compared to the analysis of \cite{NasChe93}. 
	For excellent expositions on Kantorovich and Mysovskii theory see \cite{KanAki64,OrtRhe70,Schw79} - here we replace the Newton-Mysovskii conditions by properties of an immersion. For aspects related to Newton's methods for singular points see \cite{DecKelKel83,Gri85}. For applications of generalized inverses in nonlinear analysis see 
	\cite{Nas87,Nas79}.
\end{remark}

\subsection{Neural networks}
We want to apply the decomposition theory to Gauss-Newton's methods for solving \autoref{eq:ip_2}, where 
$\Psi$ is a \emph{shallow neural network operator}. 
\begin{definition}[Shallow neural network operator] \label{de:snn}
	Let $N \in \N$ be fixed. We consider the operator 
	\begin{equation}\label{eq:classical_approximation}
		\begin{aligned}
			\Psi : \bP:=\R^N \times \R^{n \times N} \times \R^N &\to C^1([0,1]^n) \subseteq \bX := L^2([0,1]^n) ,\\ 
			(\val ,\bw,\vth ) &\mapsto \left(\vx \to \sum_{j=1}^{N} \alpha_j\sigma \left( \bw_j^T \vx +\theta_j\right) \right) \\
			\text{ where } \alpha_j, \theta_j \in \R \text{ and } \vx, \bw_j \in \R^{n}.
		\end{aligned}
	\end{equation}
    Note, that with our previous notation, for instance in \autoref{de:immersion}, we have $n_*= (n+2)*N$.
\end{definition}
We summarize the notation, because it is quite heavy:
\begin{enumerate}
	\item $\vec{\cdot}$ denotes a vector in $\R^n$ or $\R^N$,
	\item ${\bf{w}}$ denotes a matrix: The only exception is \autoref{de:deepnl}, where it is a tensor. ${\bf{w}}_j$ denotes a vector, aside from \autoref{de:deepnl}, where it is again a tensor. 
\end{enumerate}

\begin{example}[Examples of activation functions]
    $\sigma$ is called the \emph{activation function}, such as
    \begin{itemize}
        \item the \emph{sigmoid function}, defined by 
              \begin{equation} \label{eq:sigmoid} 
                 \sigma (t) = \frac{1}{1+\e^{-\frac{1}{\ve} t}} \text{ for all } 
                 t \in \R.
              \end{equation} 
          Note, we omit the $\ve$ dependence for notational convenience. 
        \item The \emph{hyperbolic tangent}
        \begin{equation} \label{eq:tanh} 
        	t \to \tanh(t) = \frac{\e^{2t}-1}{\e^{2t}+1}.
        \end{equation}
        \item The \emph{ReLU} activation function,
        \begin{equation} \label{eq:relu} 
                 \sigma(t) = \max\set{0,t} \text{ for all } 
                 t \in \R.
              \end{equation}      
      \item The \emph{step function}, which is the pointwise limit of the sigmoid function, with respect to $\ve \to 0$,
          \begin{equation} \label{eq:step} 
          \sigma(t) = \left\{\begin{array}{rl} 
          	0 & \text{ for } t < 0,\\
          	\frac{1}{2} & \text{ for } t=0,\\
          	1 & \text{ for } t > 0.
          \end{array} \right. 
          t \in \R.
      \end{equation}
    \begin{figure}[h]
      \begin{center}
      	\includegraphics[width=0.3\textwidth]{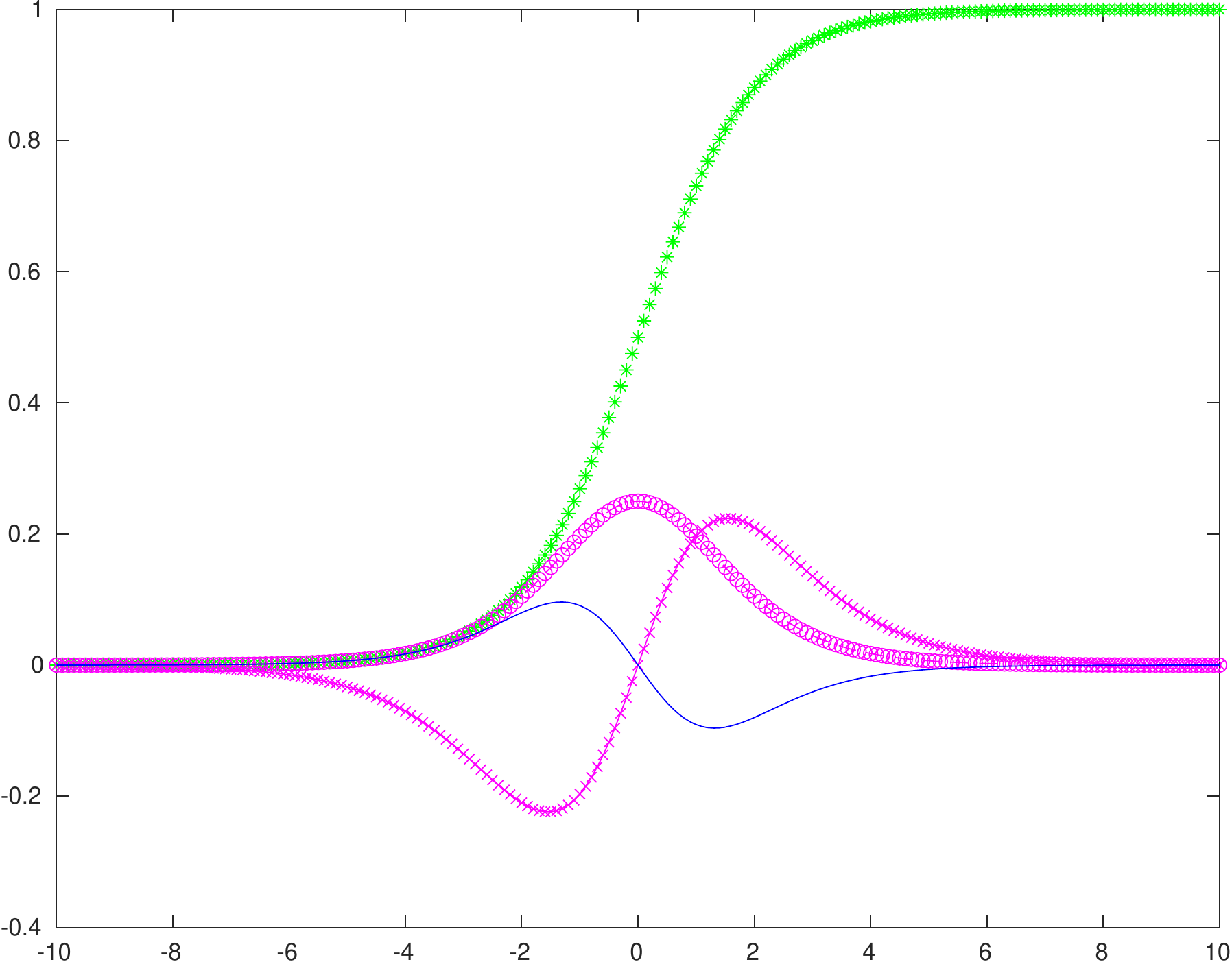} \includegraphics[width=0.3\textwidth]{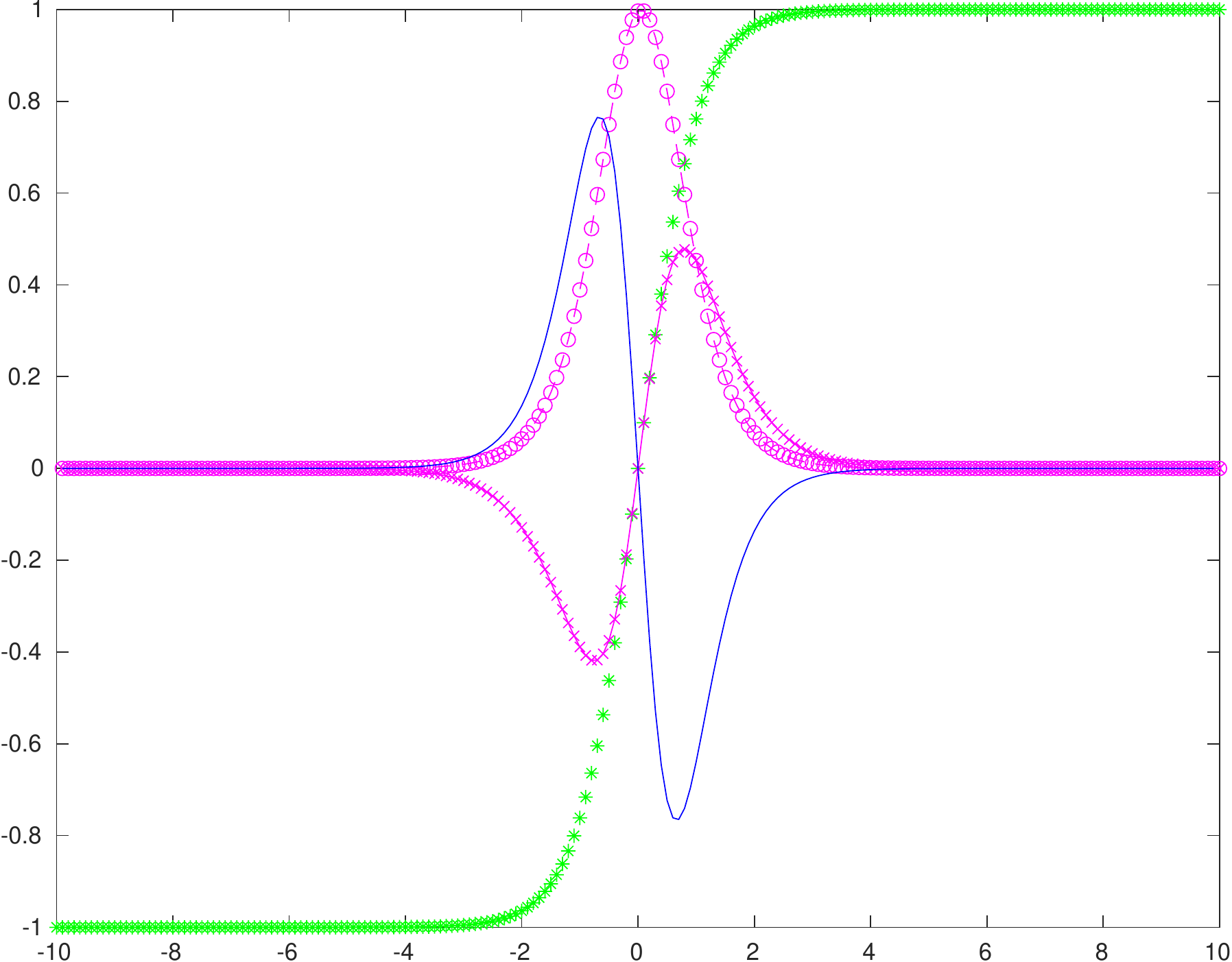}\includegraphics[width=0.3\textwidth]{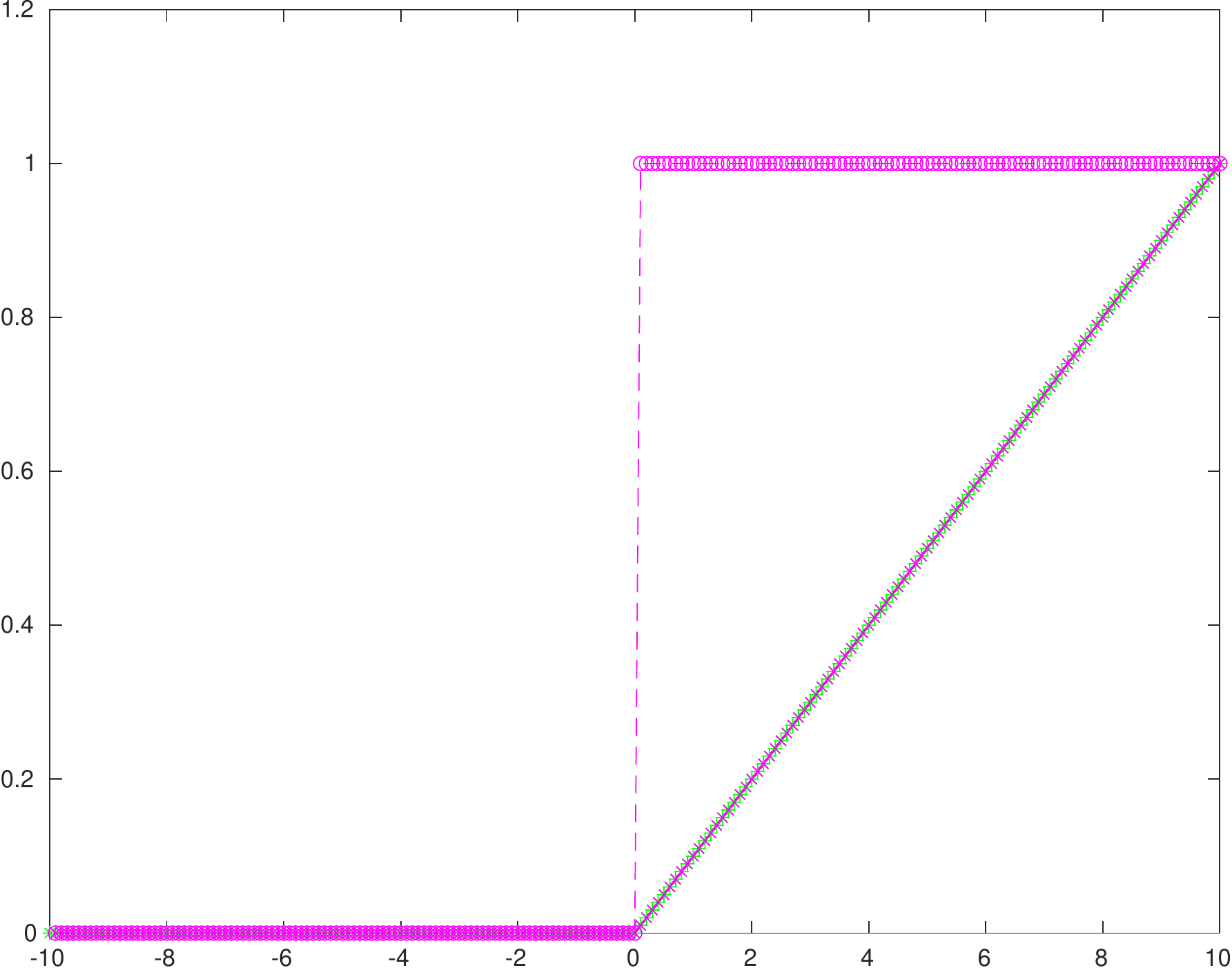}
      	\caption{\label{fig:scheme} Three different activation functions: Sigmoid, tanh and ReLU. 
      		O-th derivative green, first derivative pink with circles and with pink $\times$ is the derivative multiplied by $x$, 2nd derivative blue.
      		The ReLU function is scaled by a factor $1/10$ and the derivative is plotted in original form, derivative times $x$ is again scaled by $1/10$ for visualization purposes.}
      \end{center}
  \end{figure}
    \end{itemize}
	\end{example}
	We only consider shallow neural networks in contrast to \emph{deep neural networks}, which consist of several layers of shallow neural networks (see for instance \cite{Hor91}):
	\begin{definition}[Deep neural networks] \label{de:deepnl}
	Let 
	\begin{equation*}
		\bP_l:= \R^{N_l} \times \R^{n \times N_l} \times \R^{N_l} \text{ for } l =1,\ldots,L \text{ and } 
		\bP:= \prod_{l=1}^L \bP_l.
	\end{equation*}
	Then a deep neural network consisting of $L$ layers is written as 
	\begin{equation}\label{eq:deep_approximation}
		\begin{aligned}
			\Psi : \bP &\to  L^2([0,1]^n) ,\\ 
			(\val _l,\bw_l,\vth _l)_{l=1}^L &\mapsto 
			\left(\vx \to \sum_{j_L=1}^{N_L} \alpha_{j_L}^L \sigma_{\ve_L}^L\left( p_{j_L,L} \left( \sum_{j_{L-1}=1}^{N_{L-1}} \cdots \left( \sum_{j_1=1}^{N_1} \alpha_{j_1,1} \sigma_{\ve_1}^1 \left(p_{j_1}^1(\vx) \right) \right)\right) \right)\right), 
		\end{aligned}
	\end{equation}	
	where 
	$$p_j^i(\vx) = \bw_{j,i}^T \vx +\theta_j^i 
	\text{ with } \alpha_j^i, \theta_j^i \in \R \text{ and } \vx, \bw_{j,i} \in \R^{n} \text{ for all } i=1,\ldots,L.$$
	
	Note that the values $\ve_k$, $k=1,\ldots,L$ can be chosen differently for activation functions at different levels (cf. \autoref{eq:sigmoid}).
\end{definition}
	The success of neural network is due to the universal approximation properties, proven for the first time in \cite{Cyb89,HorStiWhi89}. The universal approximation result states that shallow neural networks are universal, that is, that each continuous function can be approximated arbitrarily well by a neural network function. We review this result now.

\begin{theorem}[\cite{Hor91}]\label{le:LinearApproximation} In dependence of the smoothness of the activation function $\sigma$ there exist two classes of results.
	\begin{itemize}
		\item Theorem 2 from \cite{Hor91}: Let $\sigma :\R \to \R^+$ be a {\bf continuous, bounded and nonconstant} function. Then, for
		every function $g \in C(\R^n)$ and every $\nu>0$, there exists a function
		\begin{equation}\label{eq:Linear}
			\vx \to G(\vx) = \sum_{j=1}^{N}\alpha_j \sigma(\bw_j^T\vx + \theta_j) \qquad \text{ with } N \in \N, \alpha_j, \theta_j \in \R, \bw_j \in \R^n,
		\end{equation}
		satisfying
		\begin{equation*}
			|G(\vx)-g(\vx)|<\nu \text{ uniformly for all compact subsets } K \subseteq \R^n.
		\end{equation*}
	   \item Theorem 1 from \cite{Hor91}: Let $\sigma :\R \to \R^+$ be {\bf unbounded and nonconstant}. Then for every measure $\mu$ on $\R^n$ and every constant $\nu > 0$ and $p \geq 1$ there exists  a function $G$ of the form \autoref{eq:Linear} that satisfies 
	   \begin{equation*}
	    	\int_{\R^n} |G(\vx)-g(\vx)|^p d \vx < \nu.
	    \end{equation*}
	\end{itemize}
\end{theorem}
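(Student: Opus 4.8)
This statement is quoted verbatim from \cite{Hor91}, so strictly nothing remains to be shown; nonetheless let me indicate the route I would take to establish such a density result from scratch. Both assertions say that the linear span
\[
\Sigma := \set{\vx \mapsto \sum_{j=1}^N \alpha_j\,\sigma(\bw_j^T\vx+\theta_j) : N\in\N,\ \alpha_j,\theta_j\in\R,\ \bw_j\in\R^n}
\]
is dense, in the first case in $C(K)$ for each compact $K\subseteq\R^n$ under the uniform norm, and in the second case in $L^p(\mu)$. Since $\Sigma$ is a linear subspace, I would prove density by the Hahn--Banach theorem: it suffices to show that every continuous linear functional annihilating $\Sigma$ vanishes. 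By the Riesz representation theorem such a functional is a finite signed regular Borel measure $\mu$ on $K$ (respectively, by duality, an element of $L^{p'}(\mu)$), and the annihilation condition reads $\int \sigma(\bw^T\vx+\theta)\,\mathrm d\mu(\vx)=0$ for all $\bw\in\R^n$ and $\theta\in\R$.

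The key structural simplification is a reduction to one dimension. Fixing a direction $\bw$ and pushing $\mu$ forward under the affine form $\vx\mapsto\bw^T\vx$ produces a finite signed measure $\nu_\bw$ on $\R$ for which $\int_\R \sigma(t+\theta)\,\mathrm d\nu_\bw(t)=0$ for every $\theta$; equivalently the convolution $\sigma*\check\nu_\bw$ vanishes identically. In parallel I would invoke that the multivariate functions $\vx\mapsto\cos(\bw^T\vx+b)$ generate an algebra separating points and hence (Stone--Weierstrass, resp. by density in $L^p(\mu)$) are dense in the target space; each such function is the composition of the single affine form $t=\bw^T\vx$ with the univariate map $t\mapsto\cos(t+b)$. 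Consequently the whole problem collapses to the univariate claim that $t\mapsto\sigma(a t+\theta)$ can approximate $\cos$ and $\sin$, after which the substitution $t=\bw^T\vx$ lifts the approximation back to $\R^n$.

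The genuine content, and the step I expect to be the main obstacle, is the univariate \emph{discriminatory} property: one must show that $\sigma*\check\nu_\bw=0$ forces $\nu_\bw=0$ for every $\bw$, whence the Cram\'er--Wold device (Fourier uniqueness applied to all one-dimensional projections) yields $\mu=0$ and closes the Hahn--Banach argument. For a merely bounded, continuous, nonconstant $\sigma$---not assumed sigmoidal with limits at $\pm\infty$---the clean tool is distributional Fourier analysis: $\sigma$ is a tempered distribution, $\hat\nu_\bw$ is a bounded continuous function, and $\widehat{\sigma*\check\nu_\bw}=\hat\sigma\,\overline{\hat\nu_\bw}=0$. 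Because $\sigma$ is nonconstant its transform $\hat\sigma$ carries mass away from the origin, and a Wiener--Tauberian type argument then forces the continuous factor $\hat\nu_\bw$ to vanish. This is precisely the delicate point where nonconstancy (rather than monotonicity or a fixed sign of the limits) is exploited.

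For the second assertion I would run the same scheme in $L^p(\mu)$, with the dual functional living in $L^{p'}(\mu)$; the additional difficulty is purely one of integrability, since an unbounded $\sigma$ need no longer define the convolution above as a tempered object outright. The remedy is to truncate $\sigma$ and approximate, using unboundedness together with nonconstancy to manufacture, via differences $\sigma(a(t+h)+\theta)-\sigma(at+\theta)$ and rescalings, functions that behave like localized bumps; controlling these in $L^p$ rather than uniformly is the technical core. Once the univariate $L^p$ density is secured, the trigonometric reduction and the Hahn--Banach annihilator argument conclude exactly as before.
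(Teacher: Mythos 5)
The paper does not prove this theorem at all: it is imported verbatim from Hornik \cite{Hor91} as background material, so there is no internal argument to compare yours against; your sketch has to be judged on its own, as a reconstruction of the known proofs (Cybenko's duality argument and Hornik's trigonometric reduction, which you interleave).

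Judged that way, there is a genuine gap, and it sits exactly at the step you yourself call the ``genuine content''. You claim that for each \emph{fixed} direction $\bw$ the translate-annihilation condition $\int_\R \sigma(t+\theta)\d\nu_\bw(t)=0$ for all $\theta\in\R$ (i.e.\ $\sigma*\check\nu_\bw\equiv 0$) forces $\nu_\bw=0$. For a general continuous, bounded, nonconstant $\sigma$ this is false: take $\sigma(t)=2+\cos t$ and $\nu=\delta_0-\delta_{2\pi}$; then $\int\sigma(t+\theta)\d\nu(t)=\cos\theta-\cos(\theta+2\pi)=0$ for every $\theta$, yet $\nu\neq 0$. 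The Wiener--Tauberian reasoning cannot rescue this: from $\hat\sigma\,\hat\nu_\bw=0$ one may only conclude that $\hat\nu_\bw$ vanishes on the \emph{support} of $\hat\sigma$, and nonconstancy of a bounded $\sigma$ guarantees merely that this support contains some point $\lambda\neq 0$ (for $\sigma=2+\cos$ it is $\set{0,\pm 1}$), not that $\hat\sigma$ is nonvanishing everywhere, which is what a Wiener-type density theorem for translates would require. What repairs the argument is precisely the information you discard when you freeze $\bw$: since $\nu_{c\bw}$ is the dilation by $c$ of $\nu_\bw$, the full hypothesis gives $\int\sigma(ct+\theta)\d\nu_\bw(t)=0$ for \emph{all} $c$ and $\theta$, i.e.\ annihilation by all dilates \emph{and} translates; sweeping $c$ moves the nonzero spectral point $\lambda$ across all nonzero frequencies, and only then does $\hat\nu_\bw\equiv 0$ (hence, via Cram\'er--Wold, $\mu=0$) follow. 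The published proofs are organized around exactly this point; translates alone suffice only for special activations whose distributional Fourier transform is nonvanishing off the origin (e.g.\ the logistic sigmoid), and Hornik's theorem is interesting precisely because it dispenses with such spectral assumptions.

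Two further debts, briefly. First, the product $\hat\sigma\cdot\hat\nu_\bw$ of a tempered distribution with a merely continuous function is not defined, so even the corrected argument needs a mollification or localization step rather than a bare multiplication of transforms. Second, your paragraph on the unbounded $L^p$ case is a statement of intent rather than an argument; note also that in \cite{Hor91} the $L^p$ result is established for \emph{bounded} nonconstant activations, so the unbounded version as quoted in the paper (covering ReLU) would in any case need an argument or a source beyond the one you outline.
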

The first result applies for instance to the sigmoid and hyperbolic tangent function (see \autoref{eq:sigmoid} and \autoref{eq:tanh}).
The second result applies to the ReLU function (see \autoref{eq:relu}). In particular all approximation properties also hold on the compact set $[0,1]^n$, which we are considering. 

\subsection{Newton-Mysovskii condition with neural networks} \label{sec:cond}
In the following we verify Newton-Mysovskii conditions for $\Psi$ being the encoder of \autoref{eq:classical_approximation}.
First we calculate the first and second derivatives of $\Psi$ with respect to $\val ,\bw$ and $\vth $. The computations can be performed for deep neural network encoders as defined in \autoref{eq:deep_approximation} in  principle analogously, but are technically and notationally more complicated.
To make the notation consistent we define 
\begin{equation*}
	\vp := (\val ,\bw,\vth ) \in \R^N \times \R^{n\times N} \times \R^N = \R^{n_*}.
\end{equation*}

\begin{lemma} \label{le:sigma}
	Let $\sigma : \R \to \R^+$ be a two times differentiable function with uniformly bounded 
	function values and first, second order derivatives, such as the sigmoid, hyperbolic tangent functions (see \autoref{fig:scheme})\footnote{This assumption is actually too restrictive, and only used to see that $\Psi \in L^2([0,1]^n)$.}. Then, the derivatives of $\Psi$ 
	with respect to the coefficients $\vp$ 
	are given by the following formulas:
    \begin{itemize}
    	\item Derivative with respect to $\alpha_s$, $s=1,\ldots,N$:
	\begin{equation}\label{eq:ca_d1}
		\begin{aligned}
			\frac{\partial \Psi}{\partial \alpha_s}[\vp](\vx) &= \sigma \left(\sum_{i=1}^n w_s^i x_i +\theta_s \right) \text{ for } s=1,\ldots,N.
		\end{aligned}
	\end{equation}	
\item Derivative with respect to $w_s^t$ where $s=1,\ldots,N$, $t=1,\ldots,n$:
\begin{equation}\label{eq:ca_d2}
\begin{aligned}
\frac{\partial \Psi}{\partial w_s^t} [\vp](\vx) &= \sum_{j=1}^{N} \alpha_j\sigma' \left(\sum_{i=1}^n w_j^i x_i +\theta_j \right) \delta_{s=j}x_t = \alpha_s \sigma' \left(\sum_{i=1}^n w_s^i x_i +\theta_s \right) x_t \\
&
\end{aligned}
\end{equation}
\item Derivative with respect to $\theta_s$ where $s=1,\ldots,N$: 
\begin{equation}\label{eq:ca_d3}
	\begin{aligned}\frac{\partial \Psi}{\partial \theta_s} [\vp](\vx)&= \sum_{j=1}^{N} \alpha_j\sigma' \left(\sum_{i=1}^n w_j^i x_i +\theta_j \right) \delta_{s=j} = \alpha_s \sigma' \left(\sum_{i=1}^n w_s^i x_i +\theta_s \right).
\end{aligned}
\end{equation}
\end{itemize}
Note, that all the derivatives above are functions in $\bX = L^2([0,1]^n)$.  
In particular, maybe in a more intuitive way, we have
	\begin{equation} \label{eq:DG}
		D \Psi[\vp](\vx)  \vh
		= \begin{pmatrix}
			\frac{\partial \Psi}{\partial \val }[\vp](\vx) &  
			\frac{\partial \Psi}{\partial \bw}[\vp](\vx) &\frac{\partial \Psi}{\partial \vth }[\vp](\vx)
		\end{pmatrix}^T\vh
		 \text{ for all } \vh = \begin{pmatrix} \vh_{\val } \\ \bh_\bw \\ \vh_{\vth }
		 \end{pmatrix}\in \R^{n_*} \text{ and } \vx \in \R^n.
	\end{equation}
	Moreover, let $s_1,s_2=1,\ldots,N$, $t_1,t_2 = 1,\ldots,n$, then we have in a formal way:
	\begin{equation}\label{eq:ca_dd}
		\begin{aligned}
			\frac{\partial^2 \Psi}{\partial \alpha_{s_1} \partial \alpha_{s_2}}(\vx) &= 0,\\
			\frac{\partial^2 \Psi}{\partial \alpha_{s_1} \partial w_{s_2}^{t_1}}(\vx) &= \sigma' \left(\sum_{i=1}^n w_{s_1}^i x_i +\theta_{s_1} \right) x_{t_1} \delta_{s_1=s_2}, \\
			\frac{\partial^2 \Psi}{\partial \alpha_{s_1} \partial \theta_{s_2}} (\vx)&= \sigma' \left(\sum_{i=1}^n w_{s_1}^i x_i +\theta_{s_1} \right) \delta_{s_1=s_2}, \\
			\frac{\partial^2 \Psi}{\partial w_{s_1}^{t_1} \partial w_{s_2}^{t_2}} (\vx)&= 
			\alpha_{s_1} \sigma'' \left(\sum_{i=1}^n w_{s_1}^i x_i +\theta_{s_1} \right) x_{t_1}x_{t_2} \delta_{s_1=s_2}, \\
			\frac{\partial^2 \Psi}{\partial w_{s_1}^{t_1} \partial \theta_{s_2}}(\vx) &= 
			\alpha_{s_1} \sigma'' \left(\sum_{i=1}^n w_{s_1}^i x_i +\theta_{s_1} \right) x_{t_1} \delta_{s_1=s_2}, \\
			\frac{\partial^2 \Psi}{\partial \theta_{s_1} \partial \theta_{s_2}}(\vx) &= \alpha_{s_1} \sigma'' \left(\sum_{i=1}^n w_{s_1}^i x_i +\theta_{s_1} \right)\delta_{s_1=s_2},
		\end{aligned}
	\end{equation}
	where $\delta_{a=b} = 1$ if $a=b$ and $0$ else, that is the Kronecker-delta. 
\end{lemma}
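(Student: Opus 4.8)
The plan is to obtain every formula by direct differentiation of the finite sum defining $\Psi$, and then to justify that the resulting pointwise-in-$\vx$ expressions are genuinely the $\bX$-valued (Fréchet) derivatives of the map $\Psi \colon \bP = \R^{n_*} \to \bX = L^2([0,1]^n)$. Since $\Psi(\vp)(\vx) = \sum_{j=1}^N \alpha_j \sigma(\bw_j^T\vx + \theta_j)$ is a finite sum, I would differentiate summand by summand. The derivative with respect to $\alpha_s$ in \autoref{eq:ca_d1} is immediate because $\Psi$ is affine in each $\alpha_s$: the difference quotient already equals $\sigma(\sum_i w_s^i x_i + \theta_s)$ with no limit to take. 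For $\theta_s$ and $w_s^t$ only the single summand $j = s$ depends on the variable, so differentiating through $\sigma$ by the chain rule produces the factor $\sigma'(\sum_i w_s^i x_i + \theta_s)$ (times $x_t$ in the $w_s^t$ case) and the Kronecker symbol $\delta_{s=j}$ that collapses the sum; this gives \autoref{eq:ca_d2} and \autoref{eq:ca_d3}. Assembling the three partials into one operator acting on an increment $\vh = (\vh_{\val}, \bh_\bw, \vh_{\vth})$ is then exactly the representation \autoref{eq:DG}.

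For the second-order formulas \autoref{eq:ca_dd} I would differentiate the first-order expressions once more. The $\alpha$--$\alpha$ block vanishes because $\partial_{\alpha_s}\Psi$ carries no $\alpha$--dependence; the mixed $\alpha$--$w$ and $\alpha$--$\theta$ blocks come from differentiating the explicit factor $\alpha_s$ in \autoref{eq:ca_d2} and \autoref{eq:ca_d3}, leaving $\sigma'(\cdots)$; and the $w$--$w$, $w$--$\theta$, $\theta$--$\theta$ blocks come from differentiating the argument of $\sigma'$, which brings down $\sigma''(\cdots)$ together with the inner derivatives $x_{t_1}, x_{t_2}$ or $1$. The doubled Kronecker symbols $\delta_{s_1=s_2}$ record that distinct neurons contribute independently, which is why no cross terms between different indices survive.

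The only genuine analytic point --- and hence the main obstacle --- is upgrading these pointwise computations to convergence in $\bX$, which is where the compactness of $[0,1]^n$ and the uniform boundedness of $\sigma, \sigma', \sigma''$ enter. For the partial with respect to $\theta_s$, say, the mean value theorem writes the difference quotient as $\alpha_s \sigma'(\sum_i w_s^i x_i + \theta_s + \xi_h)$ for an intermediate $\xi_h$; since $\sigma'$ is uniformly bounded and the domain has finite measure, the integrand is dominated by a constant, so the pointwise limit as $h \to 0$ also holds in $L^2([0,1]^n)$ by dominated convergence, and the same bound shows each candidate derivative lies in $L^\infty([0,1]^n) \subseteq L^2([0,1]^n)$ --- this is precisely the use of the boundedness hypothesis flagged in the footnote. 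Continuity of $\vp \mapsto \Psi'(\vp)$ in the operator norm, which promotes the Gâteaux derivatives to a Fréchet derivative, follows identically from the continuity of $\sigma$ and $\sigma'$ together with these uniform bounds, and the second derivatives are handled by repeating the argument with $\sigma''$ in place of $\sigma'$.
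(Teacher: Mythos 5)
Your proof is correct. The paper states this lemma without any proof at all, treating the formulas as routine term-by-term differentiation of the finite sum --- exactly the computation you carried out --- so your argument matches the intended one, and your mean-value-theorem/dominated-convergence step supplying the $L^2([0,1]^n)$ justification (and the promotion of the pointwise partials to the Fr\'echet derivative) makes explicit precisely the analytic point that the paper's footnote only gestures at.
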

The notation of directional derivatives with respect to parameters might be confusing. Note, that for instance 
$\frac{\partial \Psi}{\partial \theta_s} [\vp](\vx)$ denotes a direction derivative of the functional $\Psi$ with respect to the variable $\theta_s$ and this derivative is a function, which depends on $\vx$. The argument, where the derivative is evaluated is a vector. So in such a formula $\theta_s$ has two different meanings. Notationaly differentiating between them would be exact but becomes quite unreadable.

\begin{remark}
\begin{itemize}
	\item 	In particular \autoref{eq:ca_dd} shows that 
		\begin{equation} \label{eq:d2}
			\begin{pmatrix} 
				\vh_{\val } & \bh_\bw &\vh_{\vth }
			\end{pmatrix}
				D^2 \Psi [\vp](\vx)
			\begin{pmatrix} 
				\vh_{\val } \\ \bh_\bw \\ \vh_{\vth }
			\end{pmatrix} 
			\text{ is continuous (for fixed $\vx$) with respect to }\vp.
		\end{equation}
	\item 
We emphasize that under the assumptions of \autoref{le:sigma} the linear space (for fixed $\vp$) 
\begin{equation*}
	\range{D\Psi[\vp]} = \set{D\Psi [\vp] \vh : \vh = (\vh_{\val },\bh_\bw,\vh_{\vth }) \in \R^{N \times (n+2)}} \subseteq L^2([0,1]^n). 
\end{equation*}
\item In order to prove convergence of the Gauss-Newton's method, \autoref{eq:newton}, by applying \autoref{th:deupot92}, we have to prove that $\Psi$ is a Lipschitz-continuous immersion. Below 
we lack proving one important property so far, namely, that
	\begin{equation} \label{eq:co1}
		\partial_k \Psi[\vp],
	\quad k=1,\ldots,n_*=N(n+2)
\end{equation}
are linearly independent functions. In this paper, this will remain open as a conjecture, and the following statements are 
valid modulo this conjecture.
\end{itemize}
\end{remark}
In the following we survey some results on linear independence with respect to the coefficients $\val ,\bw,\vth $ of the functions 
$\vx \to \sigma \left(\sum_{i=1}^n w_s^i x_i +\theta_s \right)$, which match the functions  
$\vx \to \frac{\partial \Psi}{\partial \alpha_s}[\vp](\vx)$, that is with respect to the first $N$ variables.

\subsection{Linear independence of activation functions and its derivatives}

The universal approximation results from for instance \cite{Cyb89,HorStiWhi89,Hor91} do not allow to conclude 
that neural networks function as in \autoref{eq:classical_approximation} are linearly independent. 
Linear independence is a non-trivial research question: 
We recall a result from \cite{Lam22} from which linear independence of a shallow neural network operator, 
as defined in \autoref{eq:classical_approximation}, can be deduced for a variety of activator functions. 
Similar results on linear independence of shallow network functions based on sigmoid activation functions have been stated in \cite{TamTat97,Gua03}, but the discussion in \cite{Lam22} raises questions on the completeness of the proofs. 
In \cite{Lam22} it is stated that all activation functions from the \emph{Pytorch library} \cite{PasGroMasLerBra19} are linearly independent with respect to almost all parameters $\bw$ and $\theta$.
\begin{theorem}[\cite{Lam22}] \label{th:lam_main} For all activation functions \emph{HardShrink, HardSigmoid, HardTanh, HardSwish, LeakyReLU,
	PReLU, ReLU, ReLU6, RReLU, SoftShrink, Threshold}, \emph{LogSigmoid, Sigmoid, SoftPlus, Tanh, and TanShrink} and 
    the \emph{PyTorch} functions \emph{CELU, ELU, SELU} the shallow neural network functions \autoref{eq:classical_approximation} formed by randomly generated vectors $(\bw,\vth )$ are linearly independent.
\end{theorem}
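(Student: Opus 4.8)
The plan is to recast the assertion as the statement that a single real-analytic function of the parameters does not vanish identically, and then invoke the fact that the zero set of a nontrivial real-analytic function on a Euclidean space is Lebesgue-null. Fix $N$ and write $\phi_j(\vx) = \sigma(\bw_j^T\vx+\theta_j)$ for $j=1,\dots,N$, so that the functions appearing in \autoref{eq:classical_approximation} are exactly linear combinations of $\phi_1,\dots,\phi_N$. Linear independence of $\phi_1,\dots,\phi_N$ in $\bX = L^2([0,1]^n)$ is equivalent to non-vanishing of the Gram determinant
\begin{equation*}
  G(\bw,\vth) = \det\Big(\big(\inner{\phi_j}{\phi_k}_{\bX}\big)_{j,k=1}^{N}\Big).
\end{equation*}
For the smooth activations this $G$ is real-analytic in $(\bw,\vth)$, while for the piecewise ones it is real-analytic on the open set of parameters whose associated kink-hyperplanes are in general position (the complement already being null). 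Thus it suffices to prove $G\not\equiv 0$; then $\set{G=0}$ is null and a randomly drawn $(\bw,\vth)$ avoids it almost surely.

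To show $G\not\equiv 0$ I would exhibit one configuration at which $\phi_1,\dots,\phi_N$ are linearly independent. Since a linear dependence on $[0,1]^n$ restricts to a linear dependence along any line, I restrict $\vx$ to a segment $\vx = \bz + t\bv$ inside $[0,1]^n$, turning each feature into the univariate function $t\mapsto \sigma(a_j t + b_j)$ with $a_j = \bw_j^T\bv$ and $b_j = \bw_j^T\bz+\theta_j$. For generic $\bv,\bz$ and distinct $\bw_j$ the slopes $a_j$ are distinct and nonzero, so it remains to prove that the univariate functions $\sigma(a_j t + b_j)$, $j=1,\dots,N$, are linearly independent on an interval for distinct pairs $(a_j,b_j)$ with $a_j\neq 0$; suppose $\sum_j c_j\sigma(a_jt+b_j)\equiv 0$.

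Here I would split according to the analytic type of $\sigma$. If $\sigma$ is real-analytic with isolated complex singularities — as for Sigmoid, Tanh, LogSigmoid, SoftPlus and TanShrink, whose complexifications have poles or logarithmic branch points at points $\i\pi\ve(2k+1)$, $k\in\Z$ — then each term $\sigma(a_jz+b_j)$ continues to a function whose singularities lie on the vertical line $\Re z = -b_j/a_j$ at heights proportional to $1/a_j$; for distinct nonzero $a_j$ and generic $b_j$ these singular sets are pairwise disjoint, so at a singularity belonging to exactly one index the identity forces the corresponding $c_j$ to vanish by taking a residue (or the jump across the branch cut), and iterating kills all coefficients. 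If instead $\sigma$ is piecewise with a jump of some derivative $\sigma^{(m)}$ at finitely many points — as for ReLU, ReLU6, LeakyReLU, PReLU, RReLU, HardShrink, HardSigmoid, HardTanh, HardSwish, SoftShrink, Threshold, ELU, CELU and SELU — then $\sigma(a_jt+b_j)$ is non-smooth exactly at the preimages $t=(\tau-b_j)/a_j$ of those points, and for generic parameters these loci are distinct across $j$; applying the linear functional that reads off the jump of $\sigma^{(m)}$ at a point where only the $j$-th term is non-smooth isolates $c_j$ times the nonzero jump, whence $c_j=0$.

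The main obstacle is twofold. First, no single argument covers all listed activations: the smooth analytic family must be treated through its complex-analytic singularity structure, whereas the piecewise family is treated through a jump of a derivative, and one must check, activation by activation, that a genuine singularity or jump is present (equivalently, that $\sigma$ is not locally a polynomial) and locate it. Second, the genericity must be made rigorous — one must verify that the configurations for which the chosen singularities or kink-hyperplanes fail to separate form a Lebesgue-null set, and that $G$ is (piecewise) real-analytic and not identically zero, which is precisely where the good univariate configuration is used. Controlling the piecewise activations, for which $G$ is real-analytic only on the top-dimensional cells of a combinatorial decomposition of parameter space and merely continuous across the null exceptional set separating them, is the most delicate point.
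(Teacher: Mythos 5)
The paper itself contains no proof of this theorem: it is quoted from \cite{Lam22} (which, as a later remark in the paper notes, argues via Fourier methods), so there is no in-house argument to compare against and your Gram-determinant strategy is a genuinely independent route. For the real-analytic activations (Sigmoid, Tanh, LogSigmoid, SoftPlus, TanShrink) your outline is essentially sound: the Gram determinant $G$ is real-analytic on the connected parameter space (holomorphy in the complexified parameters passes under the integral over the compact cube), the zero set of a nontrivial real-analytic function is Lebesgue-null, and a single good configuration --- e.g.\ all slopes equal to $1$ and the offsets $b_j$ pairwise distinct, so that the pole sets of the $\sigma(a_jz+b_j)$ in $\C$ are pairwise disjoint and a residue argument kills the coefficients one at a time --- shows $G\not\equiv 0$. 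This also explains correctly why the ``obvious symmetry'' dependencies of the sigmoid, such as $\sigma(t)+\sigma(-t)=1$, are confined to a null set of parameters.

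The genuine gap is in the piecewise family, and it is not a detail that can be postponed. Your jump-functional argument needs every index $j$ to have its kink locus inside the domain on which independence is tested, but the statement concerns the functions of \autoref{eq:classical_approximation}, i.e.\ elements of $L^2([0,1]^n)$. For ReLU there is an open, positive-probability set of parameters --- take each $\theta_j$ so large that $\bw_j^T\vx+\theta_j>0$ for all $\vx\in[0,1]^n$ --- on which every function $\vx\mapsto\sigma(\bw_j^T\vx+\theta_j)$ is affine on the cube; if $N\geq n+2$ such functions are automatically linearly dependent, since the affine functions on $\R^n$ form an $(n+1)$-dimensional space. Hence $\set{G=0}$ contains an open set and is not null, and no choice of a single ``good configuration'' can repair this: on the cell of your combinatorial decomposition where all kinks miss the cube, $G$ vanishes identically, which is exactly the contingency your cell-by-cell analyticity argument cannot exclude. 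In fact the almost-sure statement on $[0,1]^n$ is false for ReLU-type activations with $N\geq n+2$; the result of \cite{Lam22} concerns linear independence as functions on all of $\R^n$, where every nondegenerate kink hyperplane is actually present and your jump argument does go through (note that independence on $\R^n$ does not imply independence of the restrictions in $L^2([0,1]^n)$). So either you prove the theorem in the $\R^n$ setting and flag that the paper's use of it on the cube requires extra hypotheses, or you restrict the piecewise case (e.g.\ to parameters whose kink hyperplanes meet the open cube); as written, the second half of your proof cannot be completed.
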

\begin{remark}
	\begin{enumerate} 
		\item \autoref{th:lam_main} states that the functions $\frac{\partial \Psi}{\partial \alpha_s}$ (taking into account \autoref{eq:ca_d1}) are linearly independent for \emph{almost all} parameters $(\bw,\vth ) \in \R^{n \times N} \times \R^N$. In other words, the first block of the matrix is $D \Psi$ in \autoref{eq:DG} consists of functions, which are linearly independent for almost all parameters $(\bw,\vth )$. 
		For our results to hold we need on top that the functions $\frac{\partial \Psi}{\partial w_s^t}$ and $\frac{\partial \Psi}{\partial \theta_s}$ 
		from the second and third block (see \autoref{eq:ca_dd}) are linearly independent within the blocks, respectively, and also across the blocks. So far this has not been proven but can be conjectured already from \autoref{fig:scheme}. 
		\item For the sigmoid function we have \emph{obvious symmetries} because 
		\begin{equation} \label{eq:antisymmetric}
			\sigma' \left(\bw_j^T \vx +\theta_j \right) = \sigma' \left(-\bw_j^T \vx -\theta_j \right) \text{ for every } \bw_j \in \R^n, \vth  \in \R^N,
		\end{equation}
		or in other words for the function $\Psi$ from \autoref{eq:classical_approximation} we have according to \autoref{eq:ca_d3} that 
		\begin{equation}\label{eq:antisym}
			\frac{\partial \Psi}{\partial \theta_s} [\val ,\bw,\vth ](\vx)
			=  \alpha_s \sigma'(\bw_j^T \vx + \theta_j) 
			=  \alpha_s \sigma'(-\bw_j^T \vx - \theta_j) = 
			\frac{\partial \Psi}{\partial \theta_s}[\val ,-\bw,-\vth ](\vx)
		\end{equation}
	    or in other words $\frac{\partial \Psi}{\partial \theta_s}[\val ,\bw,-\vth ]$ and 
	    $\frac{\partial \Psi}{\partial \theta_s}[\val ,-\bw,-\vth ]$ are linearly dependent.
	\end{enumerate}
\end{remark}
\begin{conjecture}\label{co:independence} We define by $\domain{\Psi}$ a \emph{maximal set of vectors} $(\val ,\bw,\vth )$ such that 
	the $n_*=N\times(n+2)$ functions in $\vx$ 
	\begin{equation*}
	\vx \to \frac{\partial \Psi}{\partial \alpha_s}[\val , \bw,\vth ](\vx), \quad
	\vx \to \frac{\partial \Psi}{\partial w_s^t}[\val ,\bw,\vth ](\vx), \quad 
	\vx \to \frac{\partial \Psi}{\partial \theta_s}[\val ,\bw,\vth ](\vx), \quad s=1,\ldots,N, t = 1,\ldots,n,
	\end{equation*}
    are linearly independent. We assume that $\domain{\Psi}$ is open and dense in $\bX \in L^2([0,1])^2$ The later is guaranteed by \autoref{le:LinearApproximation}. Recall the discussion above: The differentiation variables and the arguments coincide notationally, but are different objects.
\end{conjecture}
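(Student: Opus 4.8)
The plan is to prove the two topological assertions — that $\domain{\Psi}$ is open and dense in the parameter space $\bP = \R^{n_*}$ — separately, and to reduce both to a single algebraic fact: that the $n_*$ functions in \autoref{eq:ca_d1}--\autoref{eq:ca_d3} are linearly independent for at least one choice of $(\val,\bw,\vth)$. I would work first under the analyticity enjoyed by the sigmoid and $\tanh$ (\autoref{eq:sigmoid}, \autoref{eq:tanh}), so that each listed function is real-analytic in $\vx$ and a vanishing $L^2([0,1]^n)$-combination on the cube extends to all of $\R^n$. The natural bookkeeping device is the Gram matrix $G(\vp)\in\R^{n_*\times n_*}$ of $L^2$-inner products of the listed functions: independence at $\vp$ is equivalent to $\det G(\vp)\neq 0$. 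Since $\vp\mapsto\det G(\vp)$ is continuous, the set $\{\det G\neq 0\}$ is open, which gives openness of $\domain{\Psi}$ at once. Since it is moreover real-analytic, provided $\det G\not\equiv 0$ its zero set is a proper real-analytic subvariety, hence closed with empty interior, so its complement is dense. Thus everything hinges on producing one parameter vector at which independence holds.

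For that core step I would exploit the ridge structure. Suppose
\begin{equation*}
\sum_{s=1}^N\left[ c_s\,\sigma(\bw_s^T\vx+\theta_s) + \alpha_s\,\sigma'(\bw_s^T\vx+\theta_s)\Bigl(\textstyle\sum_{t=1}^n d_s^t x_t + e_s\Bigr)\right] = 0
\end{equation*}
holds on $[0,1]^n$, hence on $\R^n$. Choosing the directions $\bw_1,\dots,\bw_N$ pairwise non-parallel, I would separate the sum according to the direction of its Fourier support: each summand indexed by $s$ is (after multiplication by the polynomial $\sum_t d_s^t x_t + e_s$, which only differentiates along the fibre) a tempered distribution whose Fourier transform is supported on the line $\R\bw_s$. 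Distinct directions give supports meeting only at the origin, so the block for each fixed $s$ must vanish on its own, modulo a contribution concentrated at the frequency origin, i.e.\ modulo a polynomial.

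It then remains to prove the within-block independence. For a single ridge direction, passing to a coordinate $r=\bw_s^T\vx+\theta_s$ along the fibre and transverse coordinates $\vx_\perp$, one is left with $c_s\sigma(r)+\alpha_s\sigma'(r)\bigl(\lambda_s r+\mu_s+\ell_s(\vx_\perp)\bigr)$, where $\ell_s$ is the transverse-linear part of $\sum_t d_s^t x_t$. Since $\sigma'$ is non-polynomial, the factor $\sigma'(r)\,\ell_s(\vx_\perp)$ cannot be absorbed into a polynomial unless $\ell_s\equiv 0$; this forces $\vec{d}_s$ to be parallel to $\bw_s$ and reduces the question to the one-dimensional independence of $\{\sigma(r),\,\sigma'(r),\,r\sigma'(r)\}$ modulo polynomials, which holds for non-polynomial analytic $\sigma$ with $\alpha_s\neq 0$. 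Running this for each $s$ forces all $c_s$, $e_s$ and $\vec{d}_s$ to vanish, establishing independence at the chosen generic $\vp$ and hence $\det G\not\equiv 0$.

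The main obstacle is twofold. First, the Fourier-support separation is clean only when the $\bw_s$ point in distinct directions and the origin contributions are controlled; making the ``modulo a polynomial'' step rigorous — in particular ruling out polynomial cancellations across blocks — is the technical heart, and it is exactly where the symmetry recorded in \autoref{eq:antisym} (namely $\sigma'(\,\cdot\,)=\sigma'(-\,\cdot\,)$ for the sigmoid) shows that independence genuinely fails on a lower-dimensional set, so that only an open dense, and not a full, parameter set can be claimed. Second, the analyticity route excludes the non-smooth activations (ReLU, step); for these I would instead combine the distributional derivatives of \autoref{eq:ca_dd} with the almost-everywhere independence of the $\alpha$-block already supplied by \autoref{th:lam_main}, and treat the remaining blocks by the same support-and-transverse argument read in the sense of distributions, the density in \autoref{le:LinearApproximation} entering only to certify that $\domain{\Psi}$ is non-trivial.
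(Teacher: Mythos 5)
First, a point of fact: the paper contains no proof of this statement. It is recorded precisely as \autoref{co:independence}, the surrounding remark says explicitly that linear independence of the $\bw$- and $\vth$-block functions ``will remain open as a conjecture'', and the Conclusion repeats that the convergence theory (\autoref{le:shallow}, \autoref{th:newtonNN}) is conditional on it. So there is nothing of the authors' to compare your argument against; what you have written is an attempt at the open problem itself, and it has to be judged on its own merits.

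Your skeleton is the right one for the smooth bounded activations: independence at $\vp$ is equivalent to $\det G(\vp)\neq 0$ for the Gram matrix $G(\vp)$ of the functions in \autoref{eq:ca_d1}--\autoref{eq:ca_d3}; openness follows from continuity of $\vp\mapsto\det G(\vp)$; density follows from real-analyticity of $\det G$ together with one witness point where it is nonzero; and the witness is produced by Fourier-support separation of ridge functions. Moreover, the obstacle you flag as the ``technical heart'' actually dissolves: the Fourier transform of the $s$-th block is supported on the line $\R\bw_s$, and near any nonzero point of that line no other block contributes (the $\bw_s$ being pairwise non-parallel), so each block's transform is supported in $\{0\}$ and each block is \emph{individually} a polynomial. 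No cross-block polynomial cancellation can occur, and no bookkeeping is needed for it.

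The genuine gap is the within-block step. Your claim that $c\,\sigma(r)+\alpha\,\sigma'(r)(\lambda r+\mu)$ can be a polynomial only if $c=\lambda=\mu=0$ ``for non-polynomial analytic $\sigma$'' is false at that level of generality: the constraint is a first-order linear ODE in $\sigma$, and for example $\sigma=\exp$ satisfies $\sigma'-\sigma=0$, so for an exponential activation the $\val$- and $\vth$-block functions are literally collinear and the conjectured independence itself fails. Any correct proof must therefore use specific properties of the admissible activations, not just non-polynomiality: for sigmoid and hyperbolic tangent, $\sigma(r)$ and $r\sigma'(r)$ are bounded, so the polynomial is constant; letting $r\to\pm\infty$ forces $c=0$ (the two limits of $\sigma$ differ); strict positivity of $\sigma'$ then forces $\lambda=\mu=0$. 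With that repair, and choosing the witness with all $\alpha_s\neq 0$ and pairwise non-parallel $\bw_s$, your argument closes for exactly the activations admitted in \autoref{le:sigma} and \autoref{le:shallow}. Two further items need honest treatment: real-analyticity of $\vp\mapsto\det G(\vp)$ is asserted but not proved (it needs a holomorphic extension of the parameter dependence into a complex strip avoiding the poles of $\sigma$, locally uniform bounds, and Fubini/Morera); and the closing paragraph on ReLU and the step function is not a proof --- there $\sigma''$ is only a measure, \autoref{eq:ca_dd} does not define $L^2$ functions, and these activations are excluded by the hypotheses under which the conjecture is actually used, so the claim should simply be restricted. Finally, note that the conjecture's phrase ``open and dense in $\bX \in L^2([0,1])^2$'' is garbled in the paper; your reading --- open and dense in $\bP=\R^{n_*}$ --- is the only one consistent with its use, but you should say explicitly that this is the statement you prove.
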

\begin{remark}
\begin{itemize}
\item It can be conjectured that for every element from $\domain{\Psi}$ only one element in $\R^{n_*}$ exists, 
which satisfies \emph{obvious symmetries} such as formulated in \autoref{eq:antisym}. These ``mirrored'' elements 
are a set of measure zero in $\bP$. We conjecture that this corresponds to the set of measure zero as stated in \cite{Lam22}, which is derived with Fourier methods.
\item \autoref{eq:ca_dd} requires that all components of the vector $\val $ are non-zero. This means in particular that for ``sparse solutions'', with less that $n_*=N*(n+2)$ coefficients,
convergence is not guaranteed, because of a locally degenerating submanifold. 
We consider the manifold given by the function 
\begin{equation} \label{eq:vis}
	\begin{aligned}
		F:\R^2 &\to \R^2.\\
		\begin{pmatrix}x\\y\end{pmatrix} &\mapsto \begin{pmatrix}xy\\x^2+y^2\end{pmatrix}
	\end{aligned}
\end{equation}
Then 
\begin{equation*}
		\nabla F (x,y) = 
		\begin{pmatrix} y & x\\ 2x & 2y\end{pmatrix}.
\end{equation*}
We have $\det \nabla F (x,y) = 2(y^2 - x^2)$, which vanishes along the diagonals in $(x,y)$-space. That is of the diagonals the function is locally a submanifold (see \autoref{fig:vis}): 
\begin{figure}[h]
\begin{center}
\includegraphics[width=0.5\textwidth]{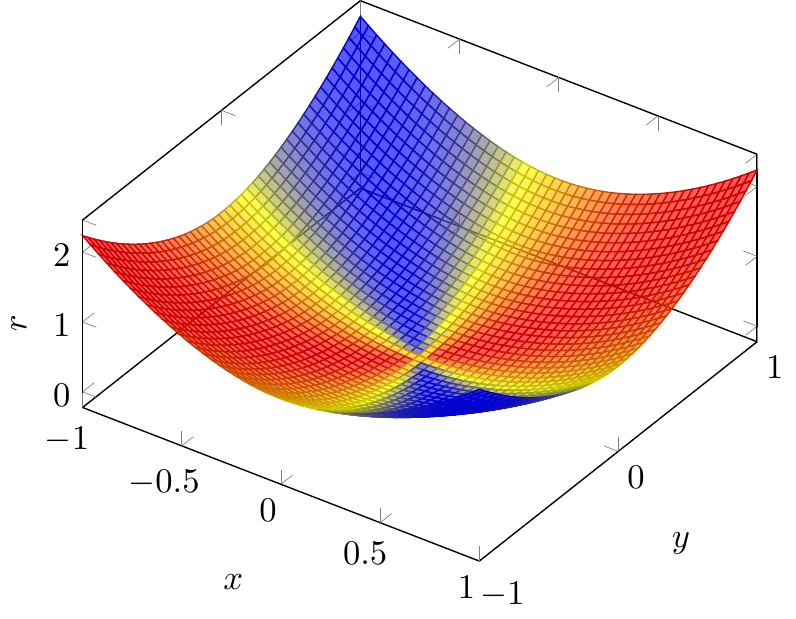}
\caption{\label{fig:vis} 
The function $F$ from \autoref{eq:vis}. We have plotted $F(x,y)$ via its polar coordinates, I.e. $r=\abs{F(x,y)}$ and $\theta=\tan^{-1}\left( \frac{xy}{x^2+y^2} \right)$. The colors correspond to identical angles.  
}
\end{center}
\end{figure}
\end{itemize}
\end{remark}

\subsection{Local convergence of Gauss-Newton's method with coding networks}
In the following we prove a local convergence result for a Gauss-Newton's method, for solving operator equations 
\autoref{eq:ip_2} where $F$ is complemented by a shallow neural network coder $\Psi$. In order to apply \autoref{th:deupot92} we have to verify that the shallow neural network operator (see \autoref{eq:classical_approximation}) is a Lipschitz-differentiable immersion. 

\begin{lemma} \label{le:shallow}Let $F:\bX = L^2([0,1]^n) \to \bY$ be linear, bounded, with trivial nullspace and closed range, and let $\sigma$ be strictly monotonic (like sigmoid or hyperbolic tangent) and satisfy the assumptions of \autoref{le:sigma}. Moreover, assume that \autoref{co:independence} holds.
	Then 
	\begin{enumerate}
		\item For every element $\vp = (\val ,\bw,\vth ) \in \R^{n_*}$ in the maximal set 
		$\domain{\Psi}$ (see \autoref{co:independence}), 
		$\mathcal{R}(D\Psi[\vp])$ is a linear subspace of the space $\bX $ of dimension $n_*=N\times(n+2)$.
		\item There exists an open neighborhood $\mathcal{U} \subseteq \R^{N \times (n+2)}$ of vectors $(\val ,\bw,\vth )$ 
		such that $\Psi$ is a Lipschitz-differentiable immersion in $\mathcal{U}$.
	\end{enumerate}
\end{lemma}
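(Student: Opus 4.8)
The plan is to treat the two assertions in turn, exploiting that the difficult ingredient -- the full-rank/immersion property -- is supplied directly by \autoref{co:independence}, so that what remains is to harvest the explicit derivative formulas of \autoref{le:sigma} for the quantitative estimates.

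For the first assertion, I would observe that, by the description of the Fr\'echet derivative in \autoref{de:immersion}, the range $\mathcal{R}(D\Psi[\vp])$ is exactly the linear span of the $n_*$ partial-derivative functions $\partial_{p_i}\Psi[\vp]$, whose $L^2([0,1]^n)$ representatives are given explicitly in \autoref{eq:ca_d1}, \autoref{eq:ca_d2} and \autoref{eq:ca_d3}. Since $\vp$ lies in the maximal set $\domain{\Psi}$ of \autoref{co:independence}, these $n_*$ functions are by hypothesis linearly independent in $\bX$. Hence the span is a linear subspace of $\bX$ of dimension exactly $n_*=N(n+2)$, which is precisely the immersion (constant full rank) requirement in \autoref{de:immersion}.

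For the second assertion I would use that $\domain{\Psi}$ is open (an assumption of \autoref{co:independence}) to choose a bounded open neighborhood $\mathcal{U}$ of $(\val,\bw,\vth)$ with $\overline{\mathcal{U}}\subseteq\domain{\Psi}$, and then verify the three defining conditions of \autoref{de:immersion} on $\mathcal{U}$.
\begin{itemize}
	\item The immersion condition holds at every $\vq\in\mathcal{U}$, by the first assertion applied pointwise; this in turn makes the projection $P_\vq$ and the fibrewise inverse $\Psi'(\vq)^{-1}$ from \autoref{de:immersion} well-defined throughout $\mathcal{U}$.
	\item For the bound on $\norm{\Psi'(\vq)}_{\bP\to\bX}$ I would estimate $\norm{\Psi'(\vq)}_{\bP\to\bX}\leq\bigl(\sum_{i=1}^{n_*}\norm{\partial_{p_i}\Psi[\vq]}_{L^2}^2\bigr)^{1/2}$ via Cauchy--Schwarz. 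On $\overline{\mathcal{U}}$ the components of $\val$ and the arguments $\vx\in[0,1]^n$ are bounded, and $\sigma,\sigma'$ are uniformly bounded by the hypotheses of \autoref{le:sigma}; hence each $\partial_{p_i}\Psi[\vq]$ is bounded in $L^\infty([0,1]^n)$, so in $L^2$, uniformly over $\vq\in\mathcal{U}$, giving the constant $C_I$ in \autoref{eq:cl}.
	\item For the Lipschitz bound I would write $\Psi'(\vp)-\Psi'(\vq)=\int_0^1 D^2\Psi[\vq+t(\vp-\vq)](\vp-\vq)\d t$ and bound the integrand using the second-derivative formulas \autoref{eq:ca_dd}; these involve only $\sigma',\sigma''$ and bounded factors $\alpha_s,x_{t}$, hence are uniformly bounded on $\mathcal{U}$, delivering the constant $C_L$ in \autoref{eq:cl}.
\end{itemize}
Together these verify that $\Psi$ is a Lipschitz-differentiable immersion on $\mathcal{U}$.

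The genuinely hard step of the statement -- linear independence of the $n_*$ derivative functions -- is exactly the content of \autoref{co:independence} and so is not reproven here; this is why the lemma is stated modulo that conjecture. The remaining subtleties are minor: strict monotonicity of $\sigma$ guarantees $\sigma'>0$ everywhere, so the $\bw$- and $\vth$-derivatives \autoref{eq:ca_d2}--\autoref{eq:ca_d3} do not vanish, consistently with the immersion hypothesis; and one must keep $\mathcal{U}$ inside the open set $\domain{\Psi}$ so that linear independence persists on the whole neighborhood. The only properly technical point is the uniformity of the two constants across $\mathcal{U}$, which reduces to compactness of $\overline{\mathcal{U}}\times[0,1]^n$ together with the continuity and boundedness of $\sigma,\sigma',\sigma''$.
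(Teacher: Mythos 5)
Your proposal is correct and follows essentially the same route as the paper's proof: part \textit{(i)} is read off directly from \autoref{co:independence} applied to the derivative formulas of \autoref{le:sigma}, and part \textit{(ii)} is obtained by fixing a neighborhood $\mathcal{U}$ with compact closure inside the open set $\domain{\Psi}$ and using continuity of $D^2\Psi$ (boundedness of $\sigma$, $\sigma'$, $\sigma''$) to produce the constants $C_I$ and $C_L$ in \autoref{eq:cl}. Your write-up is in fact more explicit than the paper's (Cauchy--Schwarz estimate for $C_I$, integral mean-value representation for $C_L$, and the requirement $\overline{\mathcal{U}}\subseteq\domain{\Psi}$ so that linear independence persists), but these are elaborations of the same argument rather than a different approach.
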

\begin{proof}
	\begin{itemize}
		\item It is clear that for each fixed $\vp$, $D\Psi[\vp] \in L^2([0,1]^n)$ because of the differentiability assumptions of $\sigma$, see \autoref{eq:d2}.
		\autoref{co:independence} implies that $\mathcal{R}(D\Psi[\vp])$ is a linear subspaces of $\bX$ of dimension $N\times(n+2)$ (note the elements are functions).
		\item $D^2\Psi[\vp]: \R^{N \times (n+2)} \to L^2([0,1]^n)$ is continuous (see \autoref{eq:d2}) since we assume that the activation function $\sigma$ is twice differentiable. Now we consider a non-empty open neighborhood $\mathcal{U}$ of a vector $\vp$, with a compact closure. 
		\begin{itemize} 
			\item Then, from the continuity of $D^2\Psi$ with respect to $\vp$, it follows that $D\Psi$ is a Fr\`echet-differentiable with Lipschitz-continuous derivative on $\mathcal{U}$. In particular this means that \autoref{it1:immersion} in \autoref{de:immersion} holds. Moreover, \autoref{eq:cl} 
			holds for $\Psi'$. That is, there exists constants $C_L$ and $C_I$ such that		
			\begin{equation} \label{eq:cla}
				\norm{\Psi'(\vp)-\Psi'(\vq)}_{\bP \to \bY} \leq C_L \norm{\vp-\vq}_\bP \text{ and } 			
					\norm{\Psi'(\vp)}_{\bP \to \bY} \leq C_I \text{ for } \vp, \vq \in \domain{\Psi}.
			\end{equation}
		\end{itemize}
	\end{itemize} 
\end{proof}
Note that $\Psi'(p)^\dagger$ as defined in \autoref{eq:MP_Penrose} is also uniformly bounded and Lipschitz-continuous as a consequence of \autoref{le:MPN}.

\begin{theorem}[Local convergence of Gauss-Newton's method] \label{th:newtonNN} Let $F:\bX = L^2([0,1]^n) \to \bY$ be a linear, bounded operator with trivial nullspace and dense range and let $N = F \circ \Psi$, where $\Psi: \domain{\Psi} \subseteq \R^{N \times (n+2)} \to \bX$ is a shallow neural network operator generated by an activation function $\sigma$ which satisfies the assumptions of \autoref{le:shallow} and \autoref{co:independence}.
	Let $\vp^0 \in \domain{\Psi}$ be the starting point of the Gauss-Newton's iteration \autoref{eq:newton} and 
	let $\vp^\dagger \in \domain{\Psi}$ be a solution of \autoref{eq:sol}, which satisfy \autoref{eq:h}.
	Then the Gauss-Newton's iterations are locally, that is if $\vp^0$ is sufficiently close to $\vp^\dagger$, and quadratically converging.
\end{theorem}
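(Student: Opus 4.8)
The plan is to reduce the statement to the abstract convergence result \autoref{th:deupot92dg}, which already delivers well-definedness and quadratic convergence of the Gauss-Newton iteration \autoref{eq:newton} for any operator $N = F\circ\Psi$ in which $F$ is linear, bounded, injective and with dense range, and $\Psi$ is a Lipschitz-differentiable immersion on an open, non-empty, convex domain. Since $F$ is assumed to have precisely these properties, the entire task is to exhibit a convex neighborhood on which the shallow network operator $\Psi$ of \autoref{eq:classical_approximation} is a Lipschitz-differentiable immersion in the sense of \autoref{de:immersion}, and which contains the initial guess $\vp^0$ together with the solution $\vp^\dagger$ and all subsequent iterates.

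First I would invoke \autoref{le:shallow}. Under the standing hypotheses on $\sigma$ (twice differentiable with bounded values and derivatives, strictly monotonic) and under \autoref{co:independence}, it produces an open neighborhood $\mathcal{U}\subseteq\R^{N\times(n+2)}$ of the reference parameter on which $\Psi$ is a Lipschitz-differentiable immersion; in particular $\Psi'(\vp)$ has full rank $n_* = N\times(n+2)$ throughout $\mathcal{U}$ and the uniform bounds \autoref{eq:cla} hold with constants $C_I,C_L$. Because \autoref{th:deupot92dg} requires a \emph{convex} domain, I would replace $\mathcal{U}$ by an open ball $\mathcal{B}(\vp^\dagger;r)\subseteq\mathcal{U}$; convexity is then automatic, items \autoref{it1:immersion}--\autoref{it3:immersion} of \autoref{de:immersion} are inherited, and by \autoref{le:MPN} the Moore-Penrose inverse $\Psi'(\vp)^\dagger$ exists and is uniformly bounded and Lipschitz-continuous there. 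By \autoref{le:dec}, on this ball $N'(\vp)^\dagger=\Psi'(\vp)^\dagger F^{-1}$ is the Moore-Penrose inverse of $N'(\vp)$ and the generalized Newton-Mysovskii condition \autoref{eq:wrnm} holds with the same $C_I,C_L$.

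With these ingredients the conclusion follows by applying \autoref{th:deupot92dg} verbatim with $\domain{\Psi}:=\mathcal{B}(\vp^\dagger;r)$. The point $\vp^\dagger$ satisfies \autoref{eq:sol}, and for $\vp^0$ sufficiently close to $\vp^\dagger$ the smallness condition \autoref{eq:h}, namely $h=\rho C_I C_L/2<1$ with $\rho=\norm{\vp^\dagger-\vp^0}_\bP$, can be met while ensuring $\overline{\mathcal{B}(\vp^0;\rho)}\subseteq\mathcal{B}(\vp^\dagger;r)$. The theorem then guarantees that every iterate stays in $\overline{\mathcal{B}(\vp^\dagger;\rho)}$ and that $\vp^k\to\vp^\dagger$ quadratically, which is exactly the assertion.

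The main obstacle is concentrated entirely in the immersion property, and hence in \autoref{co:independence}: quadratic convergence through \autoref{th:deupot92dg} requires $\Psi'(\vp)$ to have full rank $n_*$ \emph{uniformly} along the iteration, yet the set where the $n_*$ partial-derivative functions \autoref{eq:ca_d1}--\autoref{eq:ca_d3} lose linear independence is only conjectured to be nowhere dense, and is moreover genuinely degenerate when some component of $\val$ vanishes. One must therefore guarantee that the whole segment between $\vp^0$ and $\vp^\dagger$, and all Newton iterates, avoid this degeneracy set; the local closeness in \autoref{eq:h} is what confines the iterates to the ball $\overline{\mathcal{B}(\vp^\dagger;\rho)}\subseteq\domain{\Psi}$, but the very existence of a nondegenerate ball around $\vp^\dagger$ rests on the conjecture, so the result is valid only modulo \autoref{co:independence}.
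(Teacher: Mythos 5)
Your proposal is correct and follows exactly the paper's own route: the paper's proof is the one-line statement that the theorem is an immediate application of \autoref{le:shallow} to \autoref{th:deupot92dg}, which is precisely your reduction. Your additional care in shrinking the neighborhood $\mathcal{U}$ to a convex ball around $\vp^\dagger$ and in checking that \autoref{eq:h} can be met fills in details the paper leaves implicit, but it is the same argument, including the honest caveat that everything rests on \autoref{co:independence}.
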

\begin{proof}
	The proof is an immediate application of \autoref{le:shallow} to \autoref{th:deupot92dg}.
\end{proof}

\begin{remark}
	We have shown that a nonlinear operator equation, where the operator is a composition of a linear compact operator and a shallow neural network operator, can be solved with a Gauss-Newton's method with guaranteed local convergence in the parameter space. 
\end{remark}

\subsection*{Conclusion} We have shown that Gauss-Newton's methods are efficient algorithms for solving linear 
inverse problems, where the solution can be encoded with a neural network. 
The convergence studies, however, are not complete, and are based on a conjecture on linear independence of 
activation functions and its derivatives.

\subsection*{Acknowledgements}
%
%
This research was funded in whole, or in part, by the Austrian Science
Fund (FWF) P 34981 -- New Inverse Problems of Super-Resolved Microscopy
 (NIPSUM). For the purpose of open access, the author has applied
a CC BY public copyright licence to any Author Accepted Manuscript version arising
from this submission.
Moreover, OS is supported by the Austrian Science Fund (FWF),
with SFB F68 ``Tomography Across the Scales'', project F6807-N36 (Tomography with Uncertainties).
The financial support by the Austrian Federal Ministry for Digital and Economic
Affairs, the National Foundation for Research, Technology and Development and the Christian Doppler
Research Association is gratefully acknowledged.
BH is supported by the German Science Foundation (DFG) under the grant~HO~1454/13-1 (Project No.~453804957).
\section*{References}
\renewcommand{\i}{\ii}
\printbibliography[heading=none]

\end{document}